\def \[{ \mbox{[ \hspace{-.5em}[}} 
\def \]{ \mbox{] \hspace{-.5em}]}}
\numberwithin{equation}{section} 
\newcommand{\ud}{\,d} 
\newcommand{\R}{\mathbb{R}}
\newcommand\T{{\mathcal T}}
\newcommand\cof{\operatorname{cof}}
\renewcommand{\div}{\operatorname{div}}
\newcommand{\tir}[1]{\ensuremath{\overline {#1}}} 
\newtheorem{thm}{Theorem}[section] 
\newtheorem{lemma}[thm]{Lemma}
\def\whsq{\vbox to 5.8pt 
{\offinterlineskip\hrule 
\hbox to 5.8pt{\vrule height 
5.1pt\hss\vrule height 5.1pt}\hrule}}
\def\<{\langle} 
\def\>{\rangle} 
\def\PP{{\mathop{{\rm I}\kern-.2em{\rm P}}\nolimits}} 
\def\FF{{\mathop{{\rm I}\kern-.2em{\rm F}}\nolimits}}   
\def\ZZ{{\mathop{{\rm I}\kern-.2em{\rm Z}}\nolimits}} 
\newlength{\sidemargin} 
\begin{document}
\title[]{
A two-grid method for the $C^0$
interior penalty discretization of the Monge-Amp\`{e}re equation
}


\thanks{Gerard Awanou was partially supported by NSF DMS grant \# 1720276 and Hengguang Li by NSF DMS grant \# 1418853 and by the Natural Science Foundation of China (NSFC) Grant 11628104}

\author{Gerard Awanou, Hengguang Li, and Eric Malitz}

\address{Department of Mathematics, Statistics, and Computer Science, M/C 249.
University of Illinois at Chicago, 
Chicago, IL 60607-7045, USA}
\email{awanou@uic.edu }  

\address{Department of Mathematics,
Wayne State University,
656 W. Kirby, Detroit, MI 48202, USA}
\email{hli@math.wayne.edu}

\address{Department of Mathematics, Statistics, and Computer Science, M/C 249.
University of Illinois at Chicago, 
Chicago, IL 60607-7045, USA}
\email{emalit1@uic.edu }

\maketitle

\begin{abstract}
The purpose of this paper is to analyze an efficient method for the solution of the nonlinear system resulting from the discretization of the elliptic 
Monge-Amp\`ere equation by a $C^0$ interior penalty method with Lagrange finite elements. We consider the two-grid method for nonlinear equations which consists in solving the discrete nonlinear system on a coarse mesh and using that solution as initial guess for one iteration of Newton's method on a finer mesh. Thus both steps are inexpensive. We give quasi-optimal $W^{1,\infty}$ error estimates for the discretization and estimate the difference between the interior penalty solution and the two-grid numerical solution. Numerical experiments confirm the computational efficiency of the approach compared to Newton's method on the fine mesh.
\end{abstract}

\section{Introduction}
In this paper, we prove the convergence of a two grid method for solving the nonlinear system resulting from the discretization of the elliptic  Monge-Amp\`ere equation 
\begin{equation}
\det (D^2 u) = f \text{ in } \Omega, \quad u=g \text{ on } \partial \Omega,
\label{MA}
\end{equation}
with the  $C^0$ interior penalty discretization proposed in \cite{Brenner2010b}. The domain $\Omega$ is assumed to be a convex polygonal domain of $\R^2$ and \eqref{MA} is assumed to have a strictly convex smooth solution $u \in C^3(\tir{\Omega})$. The function $f \in C^1(\tir{\Omega})$ is given and satisfies $f \geq c_0$ for a constant $c_0 >0$  and the function $g\in C(\partial \Omega)$ is also given and
assumed to extend to a $C^3(\tir{\Omega})$ function $G$. In \eqref{MA}, $D^2 u=\big( \partial^2 u/(\partial x_i \partial x_j)\big)_{i,j=1,\ldots, 2} $ is the Hessian matrix of $u$ and $\det$ denotes the determinant operator. Let $V_h$ denote the Lagrange finite element space of degree $k \geq 2$ and let $D v$ denote the gradient of the function $v$. Recall that $\cof D^2 v$ denotes the matrix of cofactors of $D^2 v$. The $C^0$ interior penalty discretization can be written in abstract form as: find $u_h \in V_h$ such that $u_h=g_h$ on $\partial \Omega$ and
\begin{align} \label{IP-abstract}
A(u_h,\phi) & = 0, \forall \phi \in V_h \cap H_0^1(\Omega).
\end{align}
Here $g_h$ denotes the canonical interpolant in $V_h$ of a continuous extension of $g$  and $A$ is defined in \eqref{IP-form} below. The discretization has the property that if we denote by $A'(u;v, \phi)$ the Fr\'echet derivative evaluated at $u$ of the mapping $v \to A(v,\phi)$, then 
$A'(u;v,\phi)= \int_{\Omega} \big( (\cof  D^2 u) D v \big) \cdot D \phi \ud x,$ which gives the weak form of a standard linear elliptic operator. We exploit this property to give quasi-optimal $W^{1,\infty}$ error estimates, and the convergence of a two-grid numerical scheme for solving the discrete nonlinear system. Numerical experiments confirm the computational efficiency of the two-grid method compared to Newton's method on the fine mesh. 
Two-grid methods were initially analyzed in \cite{Xu1996}. The numerical results in \cite{Neilan2013} used a two-grid method.

Monge-Amp\`ere type equations with smooth solutions on polygonal domains appear in many problems of practical interest. For example they appear in the study of von K\'arm\'an model for plate buckling \cite{Brenner2017a}. In addition, for meteorological applications for which other differential operators are discretized with a finite element method, it would be advantageous to use a finite element discretization for the Monge-Amp\`ere operator as well. It is known that when $\Omega$ is strictly convex with a smooth boundary, and with our smoothness assumptions on $f$ and $g$, \eqref{MA} has a smooth solution. The analysis in this paper can be extended to such a framework by imposing weakly the boundary condition as in \cite{Brenner2010b}. There are several discretizations for smooth solutions of \eqref{MA}. Provably convergent schemes for non smooth solutions can be used for smooth solutions as well. However the latter have a low order of approximation for smooth solutions. We refer to \cite{Feng2013} for example for a review.  Because the interior penalty term involves the cofactor matrix of the Hessian, it is very likely that the method proposed in \cite{Brenner2010b} is suitable only for smooth solutions. It does not seem possible to put it in the framework of approximation by smooth solutions proposed in \cite{Awanou-Std04}, where the right hand side of \eqref{MA} is viewed as a measure. 

There has been no previous study of multilevel methods for finite element discretizations of \eqref{MA}. In addition, we
give quasi-optimal $W^{1,\infty}$ error estimates for the discretization with cubic and higher order elements. Localized  $W^{1,\infty}$ estimates were obtained in \cite{Neilan2013b} for quadratic and higher order elements on a smooth domain for which an elliptic regularity property holds for a linearization of the continuous problem. It is reasonable to assume that such a regularity property also holds for cubes. 
With the quasi-optimal $W^{1,\infty}$ error estimates we obtain a new proof of the optimal $H^1$ estimates obtained in \cite{Brenner2010b}.  
 
 The paper is organized as follows. In the next section, we introduce additional notation and recall some preliminary results. 
 In section \ref{the-method} we 
 give the $W^{1,\infty}$ error estimates for the discretization. 
 In section \ref{two-grid-section} we present the two-grid algorithm and its error analysis. 
In section \ref{num}, we present numerical results which confirm the computational efficiency of the two-grid algorithm.

\section{Additional notation and Preliminaries}

Let $\mathcal{T}_h$ denote a conforming, shape regular and quasi-uniform triangulation of $\Omega$ into simplices $K$. We
denote by $\mathcal{E}_h$ the set of edges of $\mathcal{T}_h$, by $\mathcal{E}^b_h$ the set of boundary edges and by $\mathcal{E}^i_h$ the set of interior edges. 

Let $h_K$ denote the diameter of the element $K$ and put $h=\max_{K \in \mathcal{T}_h} h_K$. We assume that $0< h \leq 1$. We recall that, for a shape regular and quasi-uniform triangulation, there exists a constant $\sigma >0$ such that $h_K/\rho_K \leq \sigma$, for all $K \in \mathcal{T}_h$ where $\rho_K$ denotes the radius of the largest ball inside $K$ and there is another constant $C$ such that $h \leq C h_K$ for all $K \in \mathcal{T}_h$. Throughout the paper, we will use the letter $C$ for a generic constant, independent of $h$, which may change from occurrences.

We use the usual notation $W^{s,p}(\Omega)$, $1 \leq s, p \leq \infty$ for the Sobolev spaces of functions in $L^{p}(\Omega)$, $1 \leq p \leq \infty$, with weak derivatives up to order $s$ in $L^{p}(\Omega)$. The standard notation $H^s(\Omega)$ is used for $W^{s,2}(\Omega)$ and $H^{1}_{0}(\Omega)$ denotes the subspace of elements in $H^1(\Omega)$ with vanishing trace on the boundary $\partial \Omega$. Similarly, we define $W^{1,\infty}_{0}(\Omega)$. 
The norm of $v \in W^{k,p}(\Omega)$ is denoted $\|v\|_{W^{k,p}(\Omega)}$ and its seminorm $|v|_{W^{k,p}(\Omega)}$.
We will omit the argument $\Omega$ when it is understood from context. 

Denote by $P_k(K)$ the space of polynomials of degree $k$ on the element $K$. 
We will need the broken Sobolev norm defined for $1 \leq p < \infty$ by
\begin{equation*}
\|v\|_{W^{k,p}(\T_h)}=\left(\sum_{K \in \T_h}\|v\|_{W^{k,p}(K)}^p\right)^{1/p},
\end{equation*}
and for $p=\infty$ by
\begin{equation*}
\|v\|_{W^{k,\infty}(\T_h)}=\max_{K \in \mathcal{T}_h}\|v\|_{W^{k,\infty}(K)}.
\end{equation*}
We recall the inverse estimates \cite[Lemma 4.5.3]{Brenner08}
\begin{equation}\label{inverse}
\|v\|_{W^{s,p}(\mathcal{T}_h)}\leq Ch^{t-s+\min(0,\frac{2}{p}-\frac{2}{q})}\|v\|_{W^{t,q}(\mathcal{T}_h)}, \forall v\in V_h,
\end{equation}
valid for $0\leq t \leq s$ and $1\leq p,q \leq \infty$.  We also recall the trace inequality  \cite[Theorem 1.6.6]{Brenner08},
$\|v\|_{L^p(\partial \Omega)}\leq C \|v\|_{W^{1,p}(\Omega)}, 1 \leq p \leq \infty$ which gives by a scaling argument
\begin{equation}\label{trace-estimate}
\|v\|_{L^p(\partial K)}\leq C h^{-\frac{1}{p}} (||v||_{L^p(K)} + h ||D v||_{L^p(K)} ).
\end{equation}
For $\phi \in W^{1,1}(\Omega)$, by the trace estimate \eqref{trace-estimate}, we have 
\begin{align} \label{w11-phi}
\sum_{e \in \mathcal{E}_h^i} \|\phi\|_{L^1(e)} \leq C h^{-1} \sum_{K \in \mathcal{T}_h}\|\phi\|_{W^{1,1}(K)} 
=C  h^{-1} \|\phi\|_{W^{1,1}}.
\end{align}
By an inverse estimate one has from \eqref{trace-estimate}
\begin{equation}\label{trace-inverse}
||v||_{L^2(\partial K)} \leq C h^{-\frac{1}{2}} ||v||_{L^2(K)} \, \forall v \in V_h.
\end{equation}
We will also need the following properties of the Lagrange interpolant operator $I_h$ \cite[Corollary 4.4.24]{Brenner08}
\begin{align} \label{nodal-interpolant}
||v-I_h v||_{W^{s,p}(\mathcal{T}_h)} & \leq C h^{k+1-s} ||v||_{W^{k+1,p}}, s=0, 1, 2 \text{ and } 1\leq p \leq \infty, v \in W^{k+1,p}.
\end{align}
This follows from our assumptions on the triangulation and \cite[ (4.4.5) ]{Brenner08}, i.e. for $ v \in W^{k+1,p}(K)$
\begin{equation} \label{local-nodal-interpolant}
|v-I_h v|_{W^{s,p}(K)}  \leq C h^{k+1-s}_K |v|_{W^{k+1,p}(K)}, s=0, 1, 2 \text{ and } 1\leq p \leq \infty.
\end{equation}
It follows from \eqref{trace-estimate} and \eqref{local-nodal-interpolant} that 
\begin{equation} \label{bramble}
\begin{split}
|| D( I_h u - u) ||_{L^{2}(\partial K)} & \leq C h^{- \frac{1}{2} }  || D( I_h u - u) ||_{L^{2}(K)} + C h^{  \frac{1}{2} }  || D(I_h u - u) ||_{H^{1}(K)} \\
& \leq C h^{k- \frac{1}{2}} || u ||_{H^{k+1}(K)}. 
\end{split}
\end{equation}

For two matrices $A$ and $B$, $A:B = \sum^2_{i,j=1}A_{ij}B_{ij}$ denotes their Frobenius inner product.
The divergence of a matrix field is understood as the vector obtained by taking the divergence of each row. 

The following results can be checked by simple algebraic computations and can also be found in \cite{AwanouLiMixed1}. For $v$ sufficiently smooth we have
\begin{equation}\label{det1}
\det D^2 v = \frac{1}{2} (\cof  D^2 v): D^2 v,
\end{equation}
and if $F(v) = \det D^2 v -f$, the Fr\'echet derivative of $F$ at $v$ is given by  
\begin{equation}\label{det3} 
F'(v) w = (\cof  D^2 v): D^2 w,
\end{equation}
for $v,w$ sufficiently smooth. 
Under the same assumptions
\begin{equation}\label{det2}
\div\big((\cof D^2 w) D v )= (\cof  D^2 w): D^2 v,
\end{equation}
which is a consequence of the product rule and the row divergence-free property of the Hessian, i.e. $\div( \cof  D^2 w) =0$.
It then follows that
\begin{equation}\label{det4} 
F'(u)v= \div \big((\cof  D^2 u) D v \big). 
\end{equation}
Note that 
\begin{equation} \label{linearity}
\cof(D^2 v +D^2 w) = \cof(D^2 v) + \cof(D^2 w),
\end{equation}
 since we restrict our discussion to the two dimensional case.  
We have \cite{Brenner2010b}
\begin{equation}\label{det5}
\det D^2 v - \det D^2 w = \frac{1}{2} (\cof(D^2 v) + \cof(D^2 w)): (D^2 v - D^2 w).
\end{equation}
Using \eqref{det2}, \eqref{linearity} and  \eqref{det5}, we obtain
\begin{equation}\label{det6}
\det D^2 v - \det D^2 w = \frac{1}{2}\div \Big( \big( \cof D^2 (v+w)\big) D(v -w)\Big).
\end{equation}
We recall that $u$ is strictly convex and thus $\cof D^2 u $ is uniformly positive definite; that is, there exists positive constants $\alpha_0$ and $\alpha_1$ such that $\forall x \in \mathbb{R}^2$
\begin{equation}\label{SPD}
\alpha_0 |r|^2 \leq r^T (\cof D^2 u(x) ) r \leq \alpha_1 |r|^2, \quad \forall r \in \mathbb{R}^2.  
\end{equation}
For $v \in V_h$, we will make the abuse of notation of denoting by $D^2v$ the discrete Hessian of $v$ computed element by element.

Next, we recall some algebraic manipulations of discontinuous functions. For $e \subset \partial K$, let $n_K$ denote the outward normal to $K$ and let $v|_K$ denote the restriction of the field $v$ to $K$. For $e=K^+ \cap K^-$, we define the jump of the vector field $v$ across $e$ as
\begin{equation}\label{jump}
\[v\]_e=v|_{K^+} \cdot n_{K^+} + v|_{K^-}  \cdot n_{K^-}, 
\end{equation}
and its average on $e$ as
\begin{equation}\label{average}
 \{\!\{v\}\!\}_e= \frac{1}{2}(v|_{K^+}  + v|_{K^-} ).
\end{equation}
The jump and average of a matrix field $E$ on $e$ are defined respectively as
\begin{equation}\label{matrix-jump}
\[E\]_e=n_{K^+}^T E|_{K^+}  +n_{K^-}^T E|_{K^-}, 
\end{equation}
and
\begin{equation}\label{matrix-average}
 \{\!\{E\}\!\}_e= \frac{1}{2}(E|_{K^+} + E|_{K^-}).
\end{equation}
For a matrix field $E$ and a vector field $v$ it is not difficult to check that for $e \in \mathcal{E}^i_h$
\begin{equation}\label{jumpaverage}
\[Ev\]_e=\[\{\!\{E\}\!\}_e v\]_e+\[E\]_e\{\!\{v\}\!\}_e. 
\end{equation}
We will omit below the subscript $e$ as it will be clear from the context.

Let $\hat{P}_h:H_0^1(\Omega) \rightarrow V_h$ denote the projection with respect to the bilinear form $A'(u;\cdot,\cdot)$ and recall that $G$ denotes a $C^3(\tir{\Omega})$ extension of $g$. We define 
$$
P_h u = \hat{P}_h(u-G) + I_h G,
$$
where $I_h$ denotes the canonical Lagrange interpolant operator into $V_h$.
Then
$P_h u = I_h u$ on $\partial \Omega$ and
\begin{equation}\label{2grid-proj-def}
A'(u;P_h u - u, \phi) = A'(u;I_h G - G, \phi) ,~ \forall \phi \in V_h \cap H_0^1(\Omega).
\end{equation}
Put $w=u-G$. Since $w=0$ on $\partial \Omega$ and $w$ is smooth, we have \cite[Corollary 8.1.12]{Brenner08}
\begin{equation*}
||w- \hat{P}_h(w)||_{W^{1,\infty}} \leq C h^{k} ||w||_{W^{k+1,\infty}}, \ \text{for } w \in W^{k+1,\infty}(\Omega), 
w=0 \text{ on } \partial \Omega. 
\end{equation*}
Therefore
\begin{align*}
||u-P_h u||_{W^{1,\infty}} & = ||w+ G - \hat{P}_h w-I_h G ||_{W^{1,\infty}} \\
& \leq ||w- \hat{P}_h w||_{W^{1,\infty}} +||G-I_h G||_{W^{1,\infty}}.
\end{align*}
It thus follows from the approximation properties of $I_h$ that
\begin{equation}
||u-P_h u||_{W^{1,\infty}} \leq C h^{k} ||u||_{W^{k+1,\infty}} \equiv C_1 h^k, \ \text{for } u \in W^{k+1,\infty}(\Omega).  \label{2grid-proj1}  
\end{equation}

By an inverse estimate, \eqref{2grid-proj1}, and the approximation properties of $I_h$, we have
\begin{align*}
||u-P_h u||_{W^{2,\infty}(\mathcal{T}_h)} &\leq ||u-I_h u ||_{W^{2,\infty}(\mathcal{T}_h)} + ||I_h u -P_h u||_{W^{2,\infty}(\mathcal{T}_h)} \\
& \leq C h^{k-1} ||u||_{W^{k+1,\infty}}  + C h^{-1} \big(  ||I_h u - u||_{W^{1,\infty}} +  || u-P_h u||_{W^{1,\infty}}  \big),
\end{align*}
that is
\begin{align} 
||u-P_h u||_{W^{2,\infty}(\mathcal{T}_h)} &\leq C h^{k-1} ||u||_{W^{k+1,\infty}} \label{2grid-proj2}. 
\end{align}


Following \cite{Xu1996}, we will obtain pointwise estimates via the use of discrete Green's functions. For $z\in \Omega \setminus \cup_{K \in \mathcal{T}_h}\partial K$, let $g^z_{h,i} \in V_h \cap H_0^1(\Omega)$, $i=1,2$ be defined by:
\begin{equation}\label{2grid-green1}
A'(u; g^z_{h,i},\phi)=\frac{\partial \phi}{\partial x_i}(z),  \quad \forall \phi \in V_h \cap H_0^1(\Omega),
\end{equation}
and let $G^z_h$ be defined by
\begin{equation}\label{2grid-green2}
A'(u;G^z_h,\phi) = \phi (z), \quad \forall \phi \in V_h \cap H_0^1(\Omega).
\end{equation}
We have for $h$ sufficiently small
\begin{equation}\label{2grid-W11greenbound}
\|g^z_{h,i}\|_{W^{1,1}} \leq C |\ln h|,
\end{equation}
and
\begin{align*} 
\|G^z_h\|_{L^{2}} \leq C,
\end{align*}
where the constant $C$ is independent of $z$. In the case $\cof D^2 u$ is the identity matrix, the proof is given in \cite[Lemmas 2.1 and 3.3]{ThomeeXuZhang}. The proof of the general case is similar \cite{Malitz-thesis}. Moreover
we have \cite{Malitz-thesis}, see also \cite[Exercise 8.x.19]{Brenner08},
\begin{align} \label{green-0}
\|G^z_h\|_{W^{1,1}} \leq C|\ln h|.
\end{align}
It is enough to prove that $|G^z_h |_{W^{1,1}} \leq C|\ln h|$ which follows from the bound
$| G^z_h |_{W^{1,2}} \leq C | \ln h|^{\frac{1}{2}} $ and H\"older's inequality. 
By the discrete Sobolev inequality \cite[(4.9.2)]{Brenner08} and the coercivity of the form $A'(u; .,.)$, we have for $h$ sufficiently small
\begin{align*}
| A'(u;G^z_h,G^z_h) | = | G^z_h (z)| \leq || G^z_h ||_{L^{\infty}} \leq C  | \ln h|^{\frac{1}{2}} || G^z_h ||_{H^{1}} \leq C  | \ln h|^{\frac{1}{2}} 
|A'(u;G^z_h,G^z_h)|^{\frac{1}{2}}. 
\end{align*}
It follows that
$$
| G^z_h |_{W^{1,2}}  \leq C |A'(u;G^z_h,G^z_h)|^{\frac{1}{2}} \leq C  | \ln h|^{\frac{1}{2}},
$$
giving the claimed bound.

\section{$W^{1,\infty}$ error estimates for the $C^0$ Interior penalty discretization} \label{the-method}

We first describe the interior penalty discretization proposed in \cite{Brenner2010b} for polygonal domains and with the boundary condition enforced strongly. For $\phi \in H_0^1(\Omega)$ and $v \in H^2(K)$ for all $K \in \mathcal{T}_h$,
we define
\begin{equation}\label{IP-form}
A(v,\phi) := \sum_{K \in \mathcal{T}_h} \int_{K} (f-\det D^2 v) \phi \ud x + \sum_{e \in \mathcal{E}_h^i } \int_e \[\{\!\{\cof D^2 v \}\!\} D v \] \phi
\ud s.
\end{equation}
Recall that the discrete problem is given by \eqref{IP-abstract}. The addition of the penalty terms, the second term on the right of \eqref{IP-form}, to the natural discretization of \eqref{MA} is motivated by the need in the analysis that the Fr\'echet derivative evaluated at $u$ of the mapping $v \to A(v,\phi)$ is given by
\begin{equation}\label{Frechet-at-u}
A'(u;v,\phi)= \int_{\Omega} \big( (\cof  D^2 u) D v \big) \cdot D \phi \ud x.
\end{equation} 
This is proven in \cite[p. 5]{Brenner2010b}. For the convenience of the reader, we give the proof 
in the next lemma.

Let $R(w;v,\phi)$ denote the remainder of the Taylor expansion at $w$ of $w \mapsto A(w,\phi)$, i.e. 
\begin{equation} \label{rdef0}
A(w+v,\phi)=A(w,\phi)+A'(w;v,\phi) + R(w;v,\phi).
\end{equation}

\begin{lemma}\label{interior-penalty-frechet-lemma}
For $v, w \in H^2(K)$ for all $K \in \mathcal{T}_h$ and $\phi \in H^1_0(\Omega)$, we have
\begin{align}\label{IP-frechet}
\begin{split}
A'(w;v,\phi) &  = \sum_{K \in \mathcal{T}_h} \int_{K }\big( (\cof  D^2 w)  D v \big) \cdot D \phi \ud x \\
& \quad - \sum_{e \in \mathcal{E}_h^i }
\int_e \[(\cof  D^2 w)\] \{\!\{D v\}\!\} \phi  \ud s 
 + \sum_{e \in \mathcal{E}_h^i } \int_e \[\{\!\{\cof D^2 v \}\!\} D w \] \phi
\ud s,
\end{split}
\end{align}
and
\begin{align} \label{rdef1}
R(w;v,\phi) = -\sum_{K \in \mathcal{T}_h}  \int_{K} (\det D^2v)\phi\ud x + \sum_{e \in \mathcal{E}_h^i } \int_e \[\{\!\{\cof D^2 v \}\!\} D v \] \phi\ud s.
\end{align}
In particular, for $u \in C^3(\Omega)$, \eqref{Frechet-at-u} holds.

\end{lemma}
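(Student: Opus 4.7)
The plan is to expand $A(w+v,\phi)$ directly from \eqref{IP-form} and collect the terms linear in $v$ to identify $A'(w;v,\phi)$, the remainder comprising $R(w;v,\phi)$. The algebraic backbone is the $2\times 2$ expansion
\begin{equation*}
\det D^2(w+v)=\det D^2 w+(\cof D^2 w):D^2 v+\det D^2 v,
\end{equation*}
which follows by inserting $w$ and $w+v$ into \eqref{det5} and applying \eqref{linearity} and \eqref{det1}. Together with the jump/average splitting \eqref{jumpaverage}, this converts every piece of $A(w+v,\phi)$ into a term that is either part of $A(w,\phi)$, linear in $v$, or purely nonlinear in $v$.

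First, I would substitute the determinant expansion into the volume part of $A(w+v,\phi)$, producing the linear-in-$v$ contribution $-\sum_K\int_K(\cof D^2 w):D^2 v\,\phi\,dx$ and a purely nonlinear remainder $-\sum_K\int_K\det D^2 v\,\phi\,dx$. Using \eqref{det2} to rewrite $(\cof D^2 w):D^2 v=\div\bigl((\cof D^2 w)Dv\bigr)$, element-wise integration by parts turns the linear piece into $\sum_K\int_K (\cof D^2 w)Dv\cdot D\phi\,dx$ minus an interior-edge contribution $\sum_{e\in\mathcal{E}_h^i}\int_e \[(\cof D^2 w)Dv\]\phi\,ds$, with boundary-edge terms vanishing because $\phi\in H^1_0(\Omega)$. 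Applying \eqref{jumpaverage} to this edge term splits it into a $\[\{\!\{\cof D^2 w\}\!\} Dv\]$ piece and a $\[\cof D^2 w\]\{\!\{Dv\}\!\}$ piece.

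Next, I would expand the penalty part of $A(w+v,\phi)$ using \eqref{linearity} and distributivity, obtaining four jump terms: $\[\{\!\{\cof D^2 w\}\!\} Dw\]$, which reproduces the penalty piece of $A(w,\phi)$; $\[\{\!\{\cof D^2 w\}\!\} Dv\]$, which cancels exactly against the matching contribution from the integration by parts; $\[\{\!\{\cof D^2 v\}\!\} Dw\]$, linear in $v$; and $\[\{\!\{\cof D^2 v\}\!\} Dv\]$, which joins the nonlinear volume term in the remainder. Assembling the surviving linear-in-$v$ pieces yields \eqref{IP-frechet}, and what is left gives exactly \eqref{rdef1}.

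Finally, for the specialization to $u\in C^3(\overline{\Omega})$: $\cof D^2 u$ is continuous across interior edges, so $\[\cof D^2 u\]=0$ kills the middle term of \eqref{IP-frechet}. For the last term, $\{\!\{\cof D^2 v\}\!\}$ is single-valued on each interior edge while $Du|_{K^+}=Du|_{K^-}$, so $\[\{\!\{\cof D^2 v\}\!\} Du\]=\{\!\{\cof D^2 v\}\!\}Du\cdot(n_{K^+}+n_{K^-})=0$. Together with the continuity of $\cof D^2 u$ over $\Omega$, the element-wise volume sum collapses to $\int_\Omega(\cof D^2 u)Dv\cdot D\phi\,dx$, recovering \eqref{Frechet-at-u}. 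I do not anticipate a serious obstacle; the only delicate points are tracking signs through the integration by parts and verifying the cancellation of the two $\[\{\!\{\cof D^2 w\}\!\} Dv\]$ contributions.
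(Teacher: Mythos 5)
Your proposal is correct and follows essentially the same route as the paper: the $2\times 2$ determinant expansion $\det D^2(w+v)=\det D^2 w+(\cof D^2 w):D^2 v+\det D^2 v$, the rewriting $(\cof D^2 w):D^2 v=\div\bigl((\cof D^2 w)Dv\bigr)$ via \eqref{det2}, element-wise integration by parts, and the jump/average splitting \eqref{jumpaverage} leading to the cancellation of the two $\[\{\!\{\cof D^2 w\}\!\} Dv\]$ contributions, followed by the same observations ($\[\cof D^2 u\]=0$ and $\[\{\!\{\cof D^2 v\}\!\}Du\]=0$) for the specialization to $u\in C^3$. The only distinction is cosmetic: you integrate by parts while collecting the linear-in-$v$ terms, whereas the paper first isolates $A'(w;v,\phi)$ and $R(w;v,\phi)$ and then integrates by parts, but the algebra and cancellations are identical.
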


\begin{proof}
For $w,v \in H^2(K)$ for all $K \in \mathcal{T}_h$ we have
\begin{multline*}
A(w+v,\phi)-A(w,\phi)= -\sum_{K \in \mathcal{T}_h}  \int_{K} \big(\det D^2(w+v)-\det D^2w\big)\phi\ud x \\+ 
\sum_{e \in \mathcal{E}_h^i } \int_e \[\{\!\{\cof D^2 (w+v) \}\!\} D (w+v) \] \phi \ud s - \sum_{e \in \mathcal{E}_h^i } \int_e \[\{\!\{\cof D^2 w \}\!\} D w \] \phi
\ud s,
\end{multline*}
for all $\phi \in H^1_0(\Omega)$.
Since $D^2 w$ is a $2 \times 2$ matrix, $\det D^2(w+v)= \det D^2 w + \det D^2 v +\cof D^2 w : D^2 v$. 

Thus 
\begin{multline*}
A(w+v,\phi)-A(w,\phi)= -\sum_{K \in \mathcal{T}_h}  \int_{K} (\det D^2v)\phi\ud x  -\sum_{K \in \mathcal{T}_h}  \int_{K} (\cof D^2 w: D^2v)\phi\ud x \\+ \sum_{e \in \mathcal{E}_h^i } \int_e \[\{\!\{\cof D^2 w \}\!\} D v \] \phi\ud s + \sum_{e \in \mathcal{E}_h^i } \int_e \[\{\!\{\cof D^2 v \}\!\} D w \] \phi\ud s.
+ \sum_{e \in \mathcal{E}_h^i } \int_e \[\{\!\{\cof D^2 v \}\!\} D v \] \phi\ud s.
\end{multline*}
By \eqref{det2} $\cof  D^2 w: D^2 v = \div\big((\cof D^2 w) D v )$.
This implies that
\begin{align*} 
A'(w;v,\phi) & = -\sum_{K \in \mathcal{T}_h} \int_{K} \div \big((\cof  D^2 w) D v\big) \phi \ud x +
\sum_{e \in \mathcal{E}_h^i } \int_e \[\{\!\{\cof D^2 w \}\!\} D v \] \phi 
\ud s \\ & \qquad \qquad \quad + \sum_{e \in \mathcal{E}_h^i } \int_e \[\{\!\{\cof D^2 v \}\!\} D w \] \phi
\ud s,
\end{align*}
and $R(w;v,\phi)$ is given by \eqref{rdef1}.
By integration by parts and using the fact $\phi=0$ on $\partial \Omega$,
\begin{align*}
A'(w;v,\phi) 
& = \sum_{K \in \mathcal{T}_h} \int_{K} \big((\cof  D^2 w)  D v \big) \cdot D \phi \ud x - \sum_{e \in \mathcal{E}_h^i }
\int_e \[(\cof  D^2 w) D v \] \phi   \ud x \\
& \qquad \qquad \quad  +
\sum_{e \in \mathcal{E}_h^i } \int_e \[\{\!\{\cof D^2 w \}\!\} D v \] \phi 
\ud s  + \sum_{e \in \mathcal{E}_h^i } \int_e \[\{\!\{\cof D^2 v \}\!\} D w \] \phi
\ud s .
\end{align*}
By \eqref{jumpaverage},
$
\[(\cof  D^2 w) D v\] = \[\{\!\{\cof  D^2 w \}\!\} D v \] + \[\cof  D^2 w\] \{\!\{D v\}\!\}.
$
It follows that

\begin{align*}
A'(w;v,\phi) &  = \sum_{K \in \mathcal{T}_h} \int_{K }\big( (\cof  D^2 w)  D v \big) \cdot D \phi \ud x - \sum_{e \in \mathcal{E}_h^i }
\int_e \[(\cof  D^2 w)\] \{\!\{D v\}\!\} \phi  \ud s \\\notag
& \qquad \qquad \quad  + \sum_{e \in \mathcal{E}_h^i } \int_e \[\{\!\{\cof D^2 v \}\!\} D w \] \phi
\ud s .
\end{align*}
Finally, since by assumption $u \in C^3(\Omega)$, $\[(\cof  D^2 u)\]=0$. In addition, by definition $\{\!\{\cof D^2 v \}\!\}$ is continuous and $Du$ is continuous by the assumption on $u$. Thus $\[\{\!\{\cof D^2 v \}\!\} D u \]=0$ on each interior edge. The statement about $A'(u;v,\phi)$ easily follows.
\end{proof}

\begin{lemma}
We have for $\phi \in H_0^{1}(\Omega)$ and $v, w \in H^2(K)$ for all $K \in \mathcal{T}_h$,
\begin{multline} \label{R1-simple}
R(w;v,\phi)  = \frac{1}{2} \sum_{K \in \mathcal{T}_h}  \int_K [(\cof D^2 v) D v ]  \cdot D \phi \ud x \\
-\frac{1}{2}  \sum_{e \in \mathcal{E}_h^i } \int_e \[(\cof D^2 v) \] \{\!\{ D v \}\!\} \phi \ud s 
 + \frac{1}{2} \sum_{e \in \mathcal{E}_h^i } \int_e \[ \{\!\{\cof D^2 v \}\!\} D v \] \phi\ud s.
\end{multline}
\end{lemma}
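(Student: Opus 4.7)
The plan is to reduce the new formula \eqref{R1-simple} to the earlier expression \eqref{rdef1} for $R(w;v,\phi)$ by rewriting the volume term $\int_K (\det D^2 v)\phi\,dx$ in divergence form and integrating by parts element by element. Note first that the remainder stated in \eqref{rdef1} does not involve $w$ at all (the Frechet derivative already absorbed all the $w$-dependence), so the right-hand side of \eqref{R1-simple} should likewise depend only on $v$ and $\phi$, which it does. This is the conceptual sanity check.

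Concretely, the first step is to apply \eqref{det1} together with \eqref{det2} to write, on each $K \in \mathcal{T}_h$,
\begin{equation*}
\det D^2 v \;=\; \tfrac{1}{2}\,(\cof D^2 v):D^2 v \;=\; \tfrac{1}{2}\,\div\bigl((\cof D^2 v)\,Dv\bigr).
\end{equation*}
Substituting this into the first term of \eqref{rdef1} and integrating by parts element by element produces a volume term $\tfrac{1}{2}\sum_K \int_K (\cof D^2 v)Dv \cdot D\phi\,dx$ (the first term of \eqref{R1-simple}) plus a boundary term $-\tfrac{1}{2}\sum_K \int_{\partial K}\bigl((\cof D^2 v)Dv\bigr)\cdot n_K\,\phi\,ds$. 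Because $\phi \in H_0^1(\Omega)$, contributions from $\partial \Omega$ vanish, and the sum over element boundaries collapses to a sum over interior edges with the jump operator,
\begin{equation*}
-\tfrac{1}{2}\sum_{e\in\mathcal{E}_h^i}\int_e \[(\cof D^2 v)\,Dv\]\,\phi\,ds.
\end{equation*}

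The second step is to use the jump-product identity \eqref{jumpaverage} with $E = \cof D^2 v$ and the vector $Dv$ to split this boundary contribution as
\begin{equation*}
\[(\cof D^2 v)\,Dv\] \;=\; \[\{\!\{\cof D^2 v\}\!\}\,Dv\] + \[\cof D^2 v\]\,\{\!\{Dv\}\!\}.
\end{equation*}
Plugging this back yields two edge terms, each carrying a factor $-\tfrac{1}{2}$. Finally, combining the edge term of the form $\[\{\!\{\cof D^2 v\}\!\}Dv\]\phi$ with its full-weight counterpart already present in \eqref{rdef1} (coefficient $+1$) gives the net coefficient $-\tfrac{1}{2}+1 = \tfrac{1}{2}$, producing the last term of \eqref{R1-simple}; the $\[\cof D^2 v\]\{\!\{Dv\}\!\}\phi$ term retains its coefficient $-\tfrac{1}{2}$ and becomes the middle term; and the volume term is exactly the first term of \eqref{R1-simple}.

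There is no real obstacle here beyond bookkeeping: every ingredient (the determinant/cofactor identities \eqref{det1}, \eqref{det2}, the piecewise integration by parts, and the jump decomposition \eqref{jumpaverage}) is already available in the excerpt. The only point that requires a moment of care is tracking the factor $\tfrac{1}{2}$ through the process and verifying that the edge term $\[\{\!\{\cof D^2 v\}\!\}Dv\]\phi$ coming from \eqref{rdef1} and the one produced by the integration by parts combine with the correct sign.
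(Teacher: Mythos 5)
Your proposal is correct and follows essentially the same route as the paper's proof: both rewrite $\det D^2 v$ as $\tfrac12\div\big((\cof D^2 v)\,Dv\big)$ via \eqref{det1} and \eqref{det2}, integrate by parts element by element, collapse the element-boundary integrals to interior-edge jump integrals using $\phi\in H_0^1(\Omega)$, and then apply \eqref{jumpaverage} to split $\[(\cof D^2 v)\,Dv\]$. The coefficient bookkeeping you note, $-\tfrac12+1=\tfrac12$ on the $\[\{\!\{\cof D^2 v\}\!\}\,Dv\]$ edge term, is exactly the step the paper performs.
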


\begin{proof}

Using \eqref{det1} and integration by parts
\begin{multline*}
 \int_{K} (\det D^2 v ) \phi \ud x = \frac{1}{2}  \int_{K} (\cof D^2 v ): (D^2 v) \, \phi \ud x = \frac{1}{2}  \int_{K} \div\big((\cof D^2 v) D v ) \phi \ud x
 \\
 =-  \frac{1}{2} \int_K [(\cof D^2 v) D v ] \cdot D \phi \ud x + \frac{1}{2}  \int_{\partial K} \big[ (\cof D^2 v) D v \big] \cdot n_K \phi \ud s.
\end{multline*}
By definition of jump and since $\phi=0$ on $\partial \Omega$, we have
\begin{align}
\sum_{K \in \mathcal{T}_h} \int_{\partial K} \big( (\cof D^2 v) D v \big) \cdot n_K \phi \ud s & = \sum_{e \in \mathcal{E}_h^i } \int_e \[(\cof D^2 v) D v \] \phi \ud s  \label{part-22}.
\end{align}
We conclude that
\begin{multline*}
R(w;v,\phi) = \frac{1}{2} \sum_{K \in \mathcal{T}_h}  \int_K [(\cof D^2 v) D v ] \cdot D \phi \ud x 
-\frac{1}{2}  \sum_{e \in \mathcal{E}_h^i } \int_e \[(\cof D^2 v) D v \] \phi \ud s \\
+ \sum_{e \in \mathcal{E}_h^i } \int_e \[\{\!\{\cof D^2 v \}\!\} D v \] \phi\ud s.
\end{multline*}
Therefore, using \eqref{jumpaverage} to expand the term $\[(\cof D^2 v) D v \] $ we obtain \eqref{R1-simple}.


\end{proof}

We define a nonlinear operator $\Phi: V_h \to V_h$ by  $ v_h = \Phi( v_h)$ on $\partial \Omega$ and
\begin{equation} \label{Phi-def}
A'(u; v_h-\Phi( v_h),\phi) = A_{}( v_h,\phi), \quad \forall \phi \in V_h \cap H_0^1(\Omega).
\end{equation}
A fixed point of $\Phi$ is a solution of the nonlinear finite element problem \eqref{IP-abstract}. We note that by \eqref{nodal-interpolant},  \eqref{2grid-W11greenbound} and \eqref{green-0}
\begin{align*}
| A'(u;I_h G - G, G^z_h) | & \leq C_2 h^k | \ln h| \\
| A'(u;I_h G - G, g^z_{h,i}) | & \leq C_3 h^k | \ln h|.
\end{align*}
We then define $C_4= \max\{ C_1, C_2, C_3 \}$ where the constant $C_1$ is defined in \eqref{2grid-proj1}.
Consider the closed set
\begin{equation}\label{2grid-ball}
B_h = \{ v \in V_h: v=g_h \text{ on } \partial \Omega, ||v-u||_{W^{1,\infty}} \leq 3 C_4 h^{k} |\ln h |\}.
\end{equation}
 By  \eqref{2grid-proj1} $P_h u \in B_h$ and hence $B_h$ is non-empty. 

\begin{lemma} \label{invariant}
We have $\Phi( B_h) \subset  B_h$ for $h$ sufficiently small and $k \geq 3$.
\end{lemma}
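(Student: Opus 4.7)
Fix $v_h \in B_h$ and set $w_h = \Phi(v_h)$. The boundary condition $w_h = g_h$ on $\partial \Omega$ holds by the definition of $\Phi$, so the task reduces to establishing the $W^{1,\infty}$ bound. The strategy is to compare $w_h$ to the projection $P_h u$ and invoke the discrete Green's functions $g^z_{h,i}$ and $G^z_h$, exactly in the spirit of \cite{Xu1996}.

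First I would derive an identity for $P_h u - w_h$. Since $u$ is the exact smooth solution of \eqref{MA}, $\det D^2 u = f$ inside $\Omega$ and the jump terms in \eqref{IP-form} vanish because $\cof D^2 u$ and $Du$ are continuous; hence $A(u,\phi) = 0$ for all $\phi \in H^1_0(\Omega)$. The Taylor expansion \eqref{rdef0} then gives
\begin{equation*}
A(v_h,\phi) = A'(u; v_h - u, \phi) + R(u; v_h - u, \phi).
\end{equation*}
Using the definition \eqref{Phi-def} of $\Phi$ together with \eqref{2grid-proj-def}, one then obtains
\begin{equation*}
A'(u; P_h u - w_h, \phi) = A'(u; I_h G - G, \phi) + R(u; v_h - u, \phi), \qquad \forall \phi \in V_h \cap H^1_0(\Omega).
\end{equation*}
Note that $P_h u - w_h \in V_h \cap H^1_0(\Omega)$ since both agree with $I_h G = g_h$ on $\partial \Omega$, so this test space is the right one.

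Second, I would plug $\phi = g^z_{h,i}$ and $\phi = G^z_h$ (with $z \notin \cup_K \partial K$) into this identity, using the symmetry of $A'(u; \cdot, \cdot)$ and \eqref{2grid-green1}--\eqref{2grid-green2} to obtain
\begin{equation*}
\frac{\partial (P_h u - w_h)}{\partial x_i}(z) = A'(u; I_h G - G, g^z_{h,i}) + R(u; v_h - u, g^z_{h,i}),
\end{equation*}
and similarly for $(P_h u - w_h)(z)$. The first piece is bounded by $C_4 h^k |\ln h|$ by the definition of $C_4$. The main task is to bound the remainder term. From the explicit formula \eqref{rdef1}, writing $\eta = v_h - u$, I would estimate the volume integrals by $\|\det D^2 \eta\|_{L^\infty(\mathcal{T}_h)} \|\phi\|_{L^1}$ and the edge integrals by $\|\cof D^2 \eta\|_{L^\infty(\mathcal{T}_h)} \|D\eta\|_{L^\infty(\mathcal{T}_h)} \sum_e \|\phi\|_{L^1(e)}$, invoking \eqref{w11-phi} to absorb the edge measures into $h^{-1}\|\phi\|_{W^{1,1}}$. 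A standard inverse estimate combined with the interpolation bounds on $u - I_h u$ and the fact that $v_h \in B_h$ yields
\begin{equation*}
\|D^2 \eta\|_{L^\infty(\mathcal{T}_h)} \leq C h^{-1}\|v_h - u\|_{W^{1,\infty}} + C h^{k-1} \|u\|_{W^{k+1,\infty}} \leq C h^{k-1}|\ln h|.
\end{equation*}
Plugging this in, together with $\|D\eta\|_{L^\infty} \leq C h^k|\ln h|$ and the Green's function bounds \eqref{2grid-W11greenbound}--\eqref{green-0}, produces
\begin{equation*}
|R(u; v_h - u, g^z_{h,i})| + |R(u; v_h - u, G^z_h)| \leq C h^{2k-2} |\ln h|^3.
\end{equation*}

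The critical observation is that $2k-2 \geq k+1$ precisely when $k \geq 3$, so $h^{2k-2}|\ln h|^3 = h^k|\ln h| \cdot (h^{k-2}|\ln h|^2)$ and the factor $h^{k-2}|\ln h|^2$ tends to $0$ as $h \to 0$. This is exactly where the cubic-or-higher assumption is used, and it is the single step that cannot be relaxed. For $h$ sufficiently small, the remainder is therefore bounded by $C_4 h^k|\ln h|$, giving $\|P_h u - w_h\|_{W^{1,\infty}} \leq 2 C_4 h^k |\ln h|$ after taking the supremum in $z$ (the sup over the open elements equals the $L^\infty$ norm of the piecewise-polynomial function). Finally, the triangle inequality and \eqref{2grid-proj1} give
\begin{equation*}
\|w_h - u\|_{W^{1,\infty}} \leq \|w_h - P_h u\|_{W^{1,\infty}} + \|P_h u - u\|_{W^{1,\infty}} \leq 2 C_4 h^k |\ln h| + C_4 h^k \leq 3 C_4 h^k |\ln h|
\end{equation*}
for $h$ small enough that $|\ln h| \geq 1$. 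This shows $w_h \in B_h$ and completes the proof.
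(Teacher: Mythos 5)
Your proof is correct and follows essentially the same route as the paper: derive the identity $A'(u;\Phi(v_h)-P_h u,\phi)=-R(u;v_h-u,\phi)-A'(u;I_hG-G,\phi)$, bound the remainder by $Ch^{2k-2}|\ln h|^3$ using inverse estimates and the $B_h$ constraint, test against the discrete Green's functions, and observe that the extra factor $h^{k-2}|\ln h|^2$ is absorbed for $k\geq 3$. The only cosmetic difference is that you bound the remainder directly from the raw representation \eqref{rdef1}, estimating $\|\det D^2\eta\|_{L^\infty}\|\phi\|_{L^1}$ for the volume terms, whereas the paper invokes the integrated-by-parts form \eqref{R1-simple} and funnels everything through $C h^{-1}\|v_h-u\|_{W^{2,\infty}(\mathcal{T}_h)}\|v_h-u\|_{W^{1,\infty}}\|\phi\|_{W^{1,1}}$; both yield the same $h^{2k-2}|\ln h|^3$ order, so this is not a substantive departure.
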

\begin{proof}
For $v_h \in B_h$, 
we have using \eqref{Phi-def} and \eqref{2grid-proj-def}
\begin{align*}
\begin{split}
A'(u;\Phi( v_h)- P_h u,\phi) 
&= A'(u;\Phi( v_h)-  v_h,\phi)+A'(u; v_h- P_h u,\phi) \\
&= -A_{}( v_h,\phi) + A'(u; v_h-  u,\phi) +  A'(u; u - P_h u,\phi) \\
&= -A_{}( v_h,\phi) + A'(u; v_h-  u,\phi) - A'(u;I_h G - G, \phi). 
\end{split}
\end{align*}
By definition of the residual \eqref{rdef0}, and since $A(u,\phi)=0$, we have
$$
-A_{}( v_h,\phi) + A'(u; v_h-  u,\phi) = A(u,\phi) -A_{}( v_h,\phi) + A'(u; v_h-  u,\phi) = -R(u;v_h-u,\phi).
$$
We conclude that
\begin{align} \label{expr-inv}
A'(u;\Phi( v_h)- P_h u,\phi) = -R(u;v_h-u,\phi) - A'(u;I_h G - G, \phi).
\end{align}
Therefore, using \eqref{R1-simple}, \eqref{w11-phi} and \eqref{Frechet-at-u}, we have 
\begin{multline*}
| A'(u;\Phi( v_h)- P_h u,\phi)   | \leq C || v_h-u ||_{W^{2,\infty}(\mathcal{T}_h)}  ||  v_h-u ||_{W^{1,\infty}}  || \phi ||_{W^{1,1}} \\ +C  ||  v_h-u ||_{W^{2,\infty}(\mathcal{T}_h)}  ||  v_h-u ||_{W^{1,\infty}} 
 \sum_{e \in \mathcal{E}_h^i } || \phi ||_{L^1(e)} +  |A'(u;I_h G - G, \phi) | \\
 \leq C h^{-1} || v_h-u ||_{W^{2,\infty}(\mathcal{T}_h)}  ||  v_h-u ||_{W^{1,\infty}}  || \phi ||_{W^{1,1}} + |A'(u;I_h G - G, \phi)|.
\end{multline*}
By definition of $B_h$,  $||v_h-u||_{W^{1,\infty}} \leq C h^{k} |\ln h |$. 
Moreover, by triangle inequality, \eqref{2grid-proj2}
and an inverse estimate
$||v_h-u||_{W^{2,\infty}(\mathcal{T}_h)} \leq ||v_h-P_h u||_{W^{2,\infty}(\mathcal{T}_h)} + ||P_h u-u||_{W^{2,\infty}(\mathcal{T}_h)} \leq C h^{k-1} |\ln h | + C h^{k-1}  \leq C h^{k-1} |\ln h | $. Thus
\begin{multline*}
| A'(u;\Phi( v_h)- P_h u,\phi)   | \leq C h^{k-2}  |\ln h | \, || \phi ||_{W^{1,1}}   h^{k} |\ln h | + |A'(u;I_h G - G, \phi)| \\
\leq (C h^{k-2} |\ln h |^2  ) h^{k} || \phi ||_{W^{1,1}} + | A'(u;I_h G - G, \phi)| .
\end{multline*}
Taking $\phi=g^z_{h,i}$ with the estimate \eqref{2grid-W11greenbound}, and taking $\phi=G^z_h$ with the estimate \eqref{green-0}, we obtain using the definition of $C_4$
$$
\|\Phi( v_h)- P_h u\|_{W^{1,\infty}} \leq   (C h^{k-2} |\ln h |^2 + C_4 ) h^{k } |\ln h|.
$$
Since $C  h^{k-2} |\ln h |^2  \leq C_4$ for $h$ sufficiently small and $k \geq 3$, we get
$\|\Phi( v_h)- P_h u\|_{W^{1,\infty}} \leq 2 C_4  h^{k } |\ln h|$.
By triangular inequality and \eqref{2grid-proj1}, the result follows.

\end{proof}

We will use below a certain algebraic manipulation which is encoded in the following lemma

\begin{lemma} \label{algebraic}

Let $L_1$ and  $L_2$ be linear functionals and let $L$ denote their product, i.e. $L(v) = L_1(v) L_2(v)$. We have
$$
L(w-u) - L(v-u) = L_1(w-v)L_2(w-u) + L_1(v-u) L_2 (w-v).
$$

\end{lemma}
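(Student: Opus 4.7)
The plan is to prove this purely algebraically by exploiting the linearity of $L_1$ and $L_2$ to insert the telescoping term $v-u$ inside each factor.

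First I would rewrite $w-u = (w-v) + (v-u)$ and apply linearity of $L_1$ to split the first factor of $L(w-u)$:
\[
L(w-u) = L_1(w-u)\,L_2(w-u) = \bigl(L_1(w-v) + L_1(v-u)\bigr)\,L_2(w-u).
\]
This produces one of the two desired terms, namely $L_1(w-v)L_2(w-u)$, plus a remainder $L_1(v-u)L_2(w-u)$.

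Next I would subtract $L(v-u) = L_1(v-u)L_2(v-u)$ and factor $L_1(v-u)$ out of the remaining pair, yielding
\[
L(w-u) - L(v-u) = L_1(w-v)\,L_2(w-u) + L_1(v-u)\bigl(L_2(w-u) - L_2(v-u)\bigr).
\]
Finally, linearity of $L_2$ gives $L_2(w-u) - L_2(v-u) = L_2(w-v)$, completing the identity.

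I do not expect a genuine obstacle here: the only fact used is bilinearity (really the linearity of each $L_i$ separately), and the decomposition is forced by the shape of the right-hand side. The one judgement call is the asymmetric choice to split $L_1$ in the ``new'' argument and $L_2$ in the ``old'' argument; this is dictated by which term of the conclusion contains $L_2(w-u)$ versus $L_2(w-v)$.
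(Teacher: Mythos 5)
Your proof is correct and is essentially the same as the paper's: both split $L_1(w-u)$ via $w-u=(w-v)+(v-u)$, peel off the term $L_1(w-v)L_2(w-u)$, factor $L_1(v-u)$ from what remains, and finish by applying linearity of $L_2$ to $L_2(w-u)-L_2(v-u)$.
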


\begin{proof}
We have using the linearity of $L_1$ and $L_2$
\begin{align*}
L(w-u) - L(v-u) & =  L_1(w-u)L_2(w-u) - L_1(v-u) L_2 (v-u) \\
& = [L_1(w-v) + L_1(v-u)] L_2(w-u) - L_1(v-u) L_2 (v-u) \\
& =  L_1(w-v)L_2(w-u) +  L_1(v-u) [ L_2(w-u) -  L_2 (v-u) ],
\end{align*}
from which the result follows.

\end{proof}

\begin{lemma} \label{contraction}
The mapping $\Phi$ is a strict contraction in $B_h$ for $h$ sufficiently small and $k \geq 2$.
\end{lemma}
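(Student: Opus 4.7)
The plan is to take arbitrary $v_h, w_h \in B_h$ and estimate $\Phi(w_h) - \Phi(v_h)$ via discrete Green's functions, following the same template as Lemma \ref{invariant} but applied to a difference of two residual terms. First, using the definition \eqref{Phi-def} applied to $w_h$ and $v_h$, and the Taylor identity \eqref{rdef0} evaluated at $w=u$ together with $A(u,\phi)=0$, I would obtain for every $\phi \in V_h \cap H_0^1(\Omega)$ the identity
$$
A'(u;\Phi(w_h)-\Phi(v_h),\phi)= -\bigl[R(u;w_h-u,\phi)-R(u;v_h-u,\phi)\bigr].
$$
This is analogous to \eqref{expr-inv}, but with the $A'(u;I_hG-G,\phi)$ term absent because $w_h,v_h$ agree on $\partial\Omega$ with $g_h$.

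Next, and this is where Lemma \ref{algebraic} enters, I would observe from \eqref{R1-simple} that $v\mapsto R(u;v,\phi)$ is a bilinear-in-$v$ quantity (the first argument $u$ drops out of \eqref{R1-simple}); writing it as $\mathcal B(v,v)$ for a non-symmetric bilinear form $\mathcal B$ depending on $\phi$, the algebraic identity underlying Lemma \ref{algebraic} applied with $x=w_h-u$, $y=v_h-u$ gives
$$
R(u;w_h-u,\phi)-R(u;v_h-u,\phi)=\mathcal B(w_h-v_h,\, w_h-u)+\mathcal B(v_h-u,\, w_h-v_h).
$$
The point is that each term now carries an explicit factor $w_h-v_h$, which is what will produce linearity in $\|w_h-v_h\|_{W^{1,\infty}}$.

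Then, reading off from \eqref{R1-simple} the same type of bound used in Lemma \ref{invariant}, together with the trace-type bound \eqref{w11-phi}, each piece satisfies
$$
|\mathcal B(z_1,z_2)| \leq C h^{-1}\|z_1\|_{W^{2,\infty}(\mathcal T_h)}\,\|z_2\|_{W^{1,\infty}}\,\|\phi\|_{W^{1,1}},
$$
where the $h^{-1}$ only arises from the edge terms. For the first summand I would invoke the inverse estimate \eqref{inverse} to write $\|w_h-v_h\|_{W^{2,\infty}(\mathcal T_h)} \leq C h^{-1}\|w_h-v_h\|_{W^{1,\infty}}$, and use $w_h\in B_h$ to bound $\|w_h-u\|_{W^{1,\infty}} \leq 3C_4 h^k|\ln h|$. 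For the second summand I would use $\|v_h-u\|_{W^{2,\infty}(\mathcal T_h)}\leq C h^{k-1}|\ln h|$ exactly as in the proof of Lemma \ref{invariant} (triangle inequality with $P_hu$, \eqref{2grid-proj2}, and an inverse estimate), paired with $\|w_h-v_h\|_{W^{1,\infty}}$ directly. Both summands then combine to
$$
|R(u;w_h-u,\phi)-R(u;v_h-u,\phi)| \leq C h^{k-2}|\ln h|\,\|w_h-v_h\|_{W^{1,\infty}}\,\|\phi\|_{W^{1,1}}.
$$

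Finally, I would recover the $W^{1,\infty}$ norm of $\Phi(w_h)-\Phi(v_h)$ by the discrete Green's function trick: take $\phi=g^z_{h,i}$ in \eqref{2grid-green1} to represent $\partial(\Phi(w_h)-\Phi(v_h))/\partial x_i$ at a point $z$, and $\phi=G^z_h$ in \eqref{2grid-green2} for the value itself, using the bounds \eqref{2grid-W11greenbound} and \eqref{green-0} to conclude
$$
\|\Phi(w_h)-\Phi(v_h)\|_{W^{1,\infty}} \leq C h^{k-2}|\ln h|^2\,\|w_h-v_h\|_{W^{1,\infty}}.
$$
For $h$ sufficiently small (and the polynomial degree as asserted), the prefactor is strictly less than $1$, giving the contraction. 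The main delicacy is the bilinear splitting: one has to choose which of $w_h-v_h$, $w_h-u$, $v_h-u$ carries the $W^{2,\infty}$ norm in each summand so that exactly one inverse estimate $\|{\cdot}\|_{W^{2,\infty}}\leq C h^{-1}\|{\cdot}\|_{W^{1,\infty}}$ is needed per term, while the other $W^{2,\infty}$ factor is controlled by the $O(h^{k-1}|\ln h|)$ bound inherited from $B_h$.
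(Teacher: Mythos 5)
Your proposal follows essentially the same route as the paper: the same identity $A'(u;\Phi(w_h)-\Phi(v_h),\phi)=-\bigl[R(u;w_h-u,\phi)-R(u;v_h-u,\phi)\bigr]$, the same bilinear splitting via Lemma~\ref{algebraic}, the same norms and inverse estimates, and the same conclusion via the Green's functions $g^z_{h,i}$ and $G^z_h$. So in structure you have matched the paper's argument.

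One point is worth flagging, though, because you are actually being \emph{more} careful than the paper's own proof. You retain the factor $h^{-1}$ that comes from the edge terms of \eqref{R1-simple} via \eqref{w11-phi}, exactly as was done in Lemma~\ref{invariant}, and this gives you the bound
\begin{equation*}
\|\Phi(w_h)-\Phi(v_h)\|_{W^{1,\infty}} \le C\,h^{k-2}|\ln h|^2\,\|w_h-v_h\|_{W^{1,\infty}}.
\end{equation*}
The paper's intermediate estimate in the proof of Lemma~\ref{contraction} appears to drop this $h^{-1}$ (its inequality for $|A'(u;\Phi(v_h)-\Phi(w_h),\phi)|$ carries no $h^{-1}$ prefactor, in contrast to the corresponding step of Lemma~\ref{invariant}), which is how the paper arrives at $h^{k-1}|\ln h|^2$ and the claim $k\ge 2$. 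With the factor you (correctly) keep, the prefactor $Ch^{k-2}|\ln h|^2$ is small only for $k\ge 3$; for $k=2$ it blows up as $h\to 0$. So your closing parenthetical, that ``the polynomial degree as asserted'' ($k\ge 2$) suffices, is not actually supported by the bound you derived. You should state explicitly that the contraction follows for $k\ge 3$. This does not affect the paper's downstream results, since both Lemma~\ref{invariant} and Theorem~\ref{2grid-imp-thm} already require $k\ge 3$, but it is the right conclusion from the estimate you obtained.
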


\begin{proof}
For $v_h$ and $w_h$  in $B_h$, 
we have
\begin{align*}
\begin{split}
A'(u;\Phi( v_h)- \Phi( w_h),\phi) 
&= A'(u;\Phi( v_h)-  v_h,\phi)+A'(u; v_h- w_h,\phi) \\
& \qquad \qquad \qquad \qquad \qquad  + A'(u;w_h - \Phi( w_h),\phi)\\
&= A_{}( w_h,\phi)  -A_{}( v_h,\phi) + A'(u; v_h- w_h,\phi) \\
&=  A_{}( w_h,\phi)  -A_{}( v_h,\phi) + A'(u; v_h -u,\phi) + A'(u; u - w_h,\phi).
\end{split}
\end{align*}
Since $A(u,\phi)=0$, by definition of the residual \eqref{rdef0}, we have
$$
A'(u;\Phi( v_h)- \Phi( w_h),\phi) = R(u;w_h-u,\phi) - R(u;v_h-u,\phi).
$$
Using algebraic manipulations of the type identified in Lemma \ref{algebraic} and \eqref{R1-simple}, we obtain

\begin{multline} \label{expr-con}
A'(u;\Phi( v_h)- \Phi( w_h),\phi) = \frac{1}{2} \sum_{K \in \mathcal{T}_h}  \int_K [(\cof D^2 (w_h - v_h) ) D (w_h -  u)  ]  \cdot D \phi \ud x \\
+ \frac{1}{2} \sum_{K \in \mathcal{T}_h}  \int_K [(\cof D^2 (v_h - u) ) D (w_h -  v_h)  ]  \cdot D \phi \ud x \\
-\frac{1}{2}  \sum_{e \in \mathcal{E}_h^i } \int_e \bigg( \[(\cof D^2 (w_h-v_h)) \] \{\!\{ D (w_h-u) \}\!\}  
+ \[(\cof D^2 (v_h-u)) \] \{\!\{ D (w_h-v_h) \}\!\} \bigg) \phi \ud s \\
+ \frac{1}{2} \sum_{e \in \mathcal{E}_h^i } \int_e \bigg( \[ \{\!\{\cof D^2 (w_h-v_h) \}\!\} D (w_h-u) \] 
+ \[ \{\!\{\cof D^2 (v_h-u) \}\!\} D (w_h-v_h) \] \bigg) \phi\ud s.
\end{multline} 
Arguing as in the proof of Lemma \ref{invariant} and using \eqref{w11-phi}, we obtain
\begin{multline*}
|A'(u;\Phi( v_h)- \Phi( w_h),\phi) | \leq 
 C \Big(   || w_h- v_h||_{W^{2,\infty}(\mathcal{T}_h) } ||w_h-u||_{W^{1,\infty}} \\
+  ||v_h-u||_{W^{2,\infty}(\mathcal{T}_h)}  ||v_h-w_h ||_{W^{1,\infty}} 
   \Big) ||\phi ||_{W^{1,1}}.
\end{multline*}    
As in the proof of  Lemma \ref{invariant}, we have $||v_h-u||_{W^{2,\infty}(\mathcal{T}_h)} \leq C h^{k-1} |\ln h|$ and 
recall that $||w_h-u||_{W^{1,\infty}} \leq C h^{k} |\ln h|$ 
by definition of $B_h$. Moreover, by an inverse estimate $|| w_h- v_h||_{W^{2,\infty}(\mathcal{T}_h) } \leq C h^{-1} || w_h- v_h||_{W^{1,\infty} }$. We conclude that
$$
|A'(u;\Phi( v_h)- \Phi( w_h),\phi) | \leq  C\big(        h^{k-1}  |\ln h|+h^{k-1}  |\ln h| \big)||v_h-w_h ||_{W^{1,\infty}}||\phi ||_{W^{1,1}}.
$$
Taking $\phi=g^z_{h,i}$ with the estimate \eqref{2grid-W11greenbound}, and taking $\phi=G^z_h$ with the estimate \eqref{green-0}, we obtain
$$
\|\Phi( v_h)- \Phi( w_h) \|_{W^{1,\infty}} \leq  C ( h^{k-1}+h^{k-1}   )|\ln h|^2  ||v_h-w_h ||_{W^{1,\infty}}, 
$$
that is, for $k \geq 2$ and $h$ sufficiently small, we have $ \|\Phi( v_h)- \Phi( w_h) \|_{W^{1,\infty}} \leq 1/2 ||v_h-w_h ||_{W^{1,\infty}}$.
\end{proof}

The following theorem follows from Lemmas \ref{contraction} and \ref{invariant} and the Banach fixed point theorem.

\begin{thm} \label{2grid-imp-thm}
Problem \eqref{IP-abstract} has a unique solution $u_h$ in $B_h$ for $k \geq 3$, $h$ sufficiently small and
$$||u-u_h||_{W^{1,\infty}} \leq C h^{k} |\ln h |.$$ 
\end{thm}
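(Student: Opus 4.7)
The plan is to apply the Banach fixed point theorem to the nonlinear operator $\Phi$ defined in \eqref{Phi-def}, restricted to the closed set $B_h$ from \eqref{2grid-ball}. Essentially all of the analytical work has already been done in Lemmas \ref{invariant} and \ref{contraction}; the theorem is their natural consolidation.

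First I would verify that $B_h$ is a complete metric space under the $W^{1,\infty}$ norm. It is a non-empty (since $P_h u \in B_h$ by \eqref{2grid-proj1}) closed subset of the finite-dimensional affine space $\{v \in V_h : v = g_h \text{ on } \partial \Omega\}$, and in a finite-dimensional normed space closed bounded sets are complete. Next, Lemma \ref{invariant} gives $\Phi(B_h) \subset B_h$ provided $k \geq 3$ and $h$ is sufficiently small, while Lemma \ref{contraction} gives that $\Phi$ is a strict contraction on $B_h$ (with contraction constant $1/2$) provided $k \geq 2$ and $h$ is sufficiently small. Intersecting the two hypotheses yields the stated requirement $k \geq 3$ and $h$ small.

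The Banach fixed point theorem then produces a unique $u_h \in B_h$ with $\Phi(u_h) = u_h$. By the definition of $\Phi$ in \eqref{Phi-def}, the identity $\Phi(u_h) = u_h$ is equivalent to
\begin{equation*}
A(u_h,\phi) = A'(u;u_h - \Phi(u_h),\phi) = 0 \quad \forall \phi \in V_h \cap H_0^1(\Omega),
\end{equation*}
so $u_h$ satisfies \eqref{IP-abstract}; the boundary condition $u_h = g_h$ on $\partial \Omega$ is built into membership in $B_h$. The error estimate $\|u - u_h\|_{W^{1,\infty}} \leq C h^k |\ln h|$ is then immediate from $u_h \in B_h$ (with $C = 3C_4$).

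The main obstacle was already overcome in the two preceding lemmas, where the key difficulty was controlling the quadratic remainder term $R(u;v_h-u,\phi)$ and the consistency error $A'(u;I_hG-G,\phi)$ against the $W^{1,1}$ norm of the discrete Green's functions via \eqref{2grid-W11greenbound} and \eqref{green-0}, together with the inverse estimate $\|v_h-u\|_{W^{2,\infty}(\mathcal{T}_h)} \leq C h^{k-1}|\ln h|$. At the level of the theorem itself, the only point requiring any care is simply to match the hypotheses $k \geq 2$ and $k \geq 3$ of the two lemmas, which forces $k \geq 3$ in the conclusion.
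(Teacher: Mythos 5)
Your proof is correct and takes exactly the same route as the paper, which simply states that the theorem follows from Lemmas \ref{invariant} and \ref{contraction} together with the Banach fixed point theorem. You have filled in the (routine) details — completeness of $B_h$, non-emptiness via $P_h u$, the equivalence between a fixed point of $\Phi$ and a solution of \eqref{IP-abstract}, and the error bound from membership in $B_h$ — all correctly.
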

We note that in the case of a homogeneous boundary condition, $G=0$ and the right hand side of \eqref{2grid-proj-def} vanishes. In that case, the right hand side of \eqref{expr-inv} simplifies and the rate of convergence in the $W^{1,\infty}$ norm can be shown to be optimal. In other words, Theorem \ref{2grid-imp-thm} can be improved with suitable estimates of the Ritz projection with a non homogeneous boundary condition. 
The following optimal error estimate in the $H^1$ norm is derived from Theorem \ref{2grid-imp-thm}. A different proof was given in \cite{Brenner2010b}. 

\begin{thm}  \label{2grid-imp2-thm}
Problem \eqref{IP-abstract} has a unique solution $u_h$ in $B_h$ for $k \geq 3$, $h$ sufficiently small and
$$||u-u_h||_{H^{1}} \leq C h^{k}. $$
\end{thm}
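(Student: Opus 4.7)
The strategy is to exploit the equation already derived in the proof of Lemma~\ref{invariant}, namely
\[
A'(u; u_h - P_h u, \phi) = -R(u; u_h - u, \phi) - A'(u; I_h G - G, \phi), \quad \forall \phi \in V_h \cap H_0^1(\Omega),
\]
choose $\phi = u_h - P_h u$, and invoke the coercivity of $A'(u;\cdot,\cdot)$ on $H_0^1(\Omega)$ which follows from \eqref{Frechet-at-u} and the uniform positive definiteness \eqref{SPD} of $\cof D^2 u$. This gives the Galerkin-type inequality $\alpha_0 |u_h - P_h u|_{H^1}^2 \leq |R(u; u_h - u, u_h - P_h u)| + |A'(u; I_h G - G, u_h - P_h u)|$. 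After bounding the right-hand side by $Ch^k|u_h - P_h u|_{H^1}$, we will obtain $|u_h - P_h u|_{H^1} \leq C h^k$ and then conclude by the triangle inequality together with the standard estimate $\|u - P_h u\|_{H^1} \leq C h^k$ (which follows exactly as in \eqref{2grid-proj1}, using the optimal $H^1$ convergence of the Ritz projection $\hat P_h$ for a smooth function vanishing on $\partial\Omega$ and the standard interpolation bound for $G - I_h G$).

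The easy piece is the Ritz-type term: since $\|I_h G - G\|_{H^1} \leq C h^k$ by \eqref{nodal-interpolant} and $\cof D^2 u \in L^\infty$, we have $|A'(u; I_h G - G, \phi)| \leq C h^k |\phi|_{H^1}$. The substantive work is bounding $R(u; u_h - u, \phi)$ via its explicit form \eqref{R1-simple}. The key inputs are the pointwise estimates already established: $\|u_h - u\|_{W^{1,\infty}} \leq C h^k|\ln h|$ from Theorem~\ref{2grid-imp-thm}, and $\|u_h - u\|_{W^{2,\infty}(\mathcal{T}_h)} \leq C h^{k-1}|\ln h|$ as derived in the proof of Lemma~\ref{invariant}. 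For the volume term in \eqref{R1-simple} we place $\cof D^2(u_h - u)$ in $L^\infty$ and $D(u_h-u)$, $D\phi$ in $L^2$, yielding a bound of order $\|u_h-u\|_{W^{2,\infty}(\mathcal{T}_h)}\,\|u_h-u\|_{H^1}\,|\phi|_{H^1} \leq C h^{2k-1}|\ln h|^2 \,|\phi|_{H^1}$ after using the trivial bound $\|u_h-u\|_{H^1}\leq |\Omega|^{1/2}\|u_h-u\|_{W^{1,\infty}}$.

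The main obstacle is the edge terms in $R(u;u_h-u,\phi)$, which must be controlled without losing the optimal $h^k$ rate. For these I will place the jumped cofactor $\[\cof D^2(u_h-u)\]$ and the average $\{\!\{\cof D^2(u_h-u)\}\!\}$ in $L^\infty(e)$, absorbing a factor of $\|u_h-u\|_{W^{2,\infty}(\mathcal{T}_h)} \leq C h^{k-1}|\ln h|$, and then apply the trace inequality \eqref{trace-estimate} edge by edge to both the factor $D(u_h-u)$ (or $\{\!\{D(u_h-u)\}\!\}$) and $\phi$, using the already established broken $H^2$ bound $\|u_h-u\|_{H^2(\mathcal{T}_h)} \leq C h^{k-1}|\ln h|$ coming from the $W^{2,\infty}(\mathcal{T}_h)$ estimate. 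A Cauchy--Schwarz over edges combined with trace then produces a bound of order $C h^{k-1}|\ln h| \cdot h^{k-1/2}|\ln h| \cdot h^{-1/2}\|\phi\|_{H^1} = C h^{2k-2}|\ln h|^2\|\phi\|_{H^1}$. Since $k\geq 3$ we have $2k-2 \geq k+1$, so all these contributions are absorbed into $h^k$, and the proof closes.
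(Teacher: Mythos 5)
Your proposal is correct, and it is in fact a mild simplification of the paper's argument. The paper decomposes
\[
A'(u;u_h - P_h u,\phi) = A'(u;\Phi(u_h) - \Phi(P_h u),\phi) + A'(u;\Phi(P_h u) - P_h u,\phi),
\]
bounding the first piece via \eqref{expr-con} (with $w_h = u_h$, $v_h = P_h u$) and the second via \eqref{expr-inv} (with $v_h = P_h u$). If one writes both pieces out using \eqref{expr-inv} and \eqref{expr-con}, the intermediate residual $R(u;P_h u - u,\phi)$ cancels and one is left with exactly your identity
\[
A'(u;u_h-P_h u,\phi) = -R(u;u_h-u,\phi) - A'(u;I_h G - G,\phi),
\]
which is just \eqref{expr-inv} applied with $v_h = u_h$ together with $\Phi(u_h)=u_h$. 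So the two routes are algebraically the same; you have simply skipped the detour through $\Phi(P_h u)$. Your subsequent estimates on $R(u;u_h-u,\phi)$ are sound: in two dimensions $\cof$ is linear, so $\|\cof D^2(u_h-u)\|_{L^\infty(\mathcal{T}_h)}\leq C\|u_h-u\|_{W^{2,\infty}(\mathcal{T}_h)}\leq C h^{k-1}|\ln h|$; the volume term is then $O(h^{2k-1}|\ln h|^2)\|\phi\|_{H^1}$ using the suboptimal $\|u_h-u\|_{H^1}\leq C h^k|\ln h|$, and the edge terms, after the scaled trace estimate \eqref{trace-estimate} and edge-wise Cauchy--Schwarz, are $O(h^{2k-2}|\ln h|^2)\|\phi\|_{H^1}$. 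Both are $\leq Ch^k\|\phi\|_{H^1}$ for $k\geq 3$ and $h$ small, matching the dominant term $Ch^{k-2}|\ln h|^2 h^k\|\phi\|_{H^1}$ that the paper arrives at in \eqref{u-p-05}--\eqref{u-p-06}. Taking $\phi = u_h - P_h u$, using the coercivity coming from \eqref{SPD} and Poincar\'e's inequality, and finishing with the triangle inequality and $\|u-P_h u\|_{H^1}\leq Ch^k$ is exactly what the paper does. What the paper's longer decomposition buys is the ability to reuse verbatim the expression \eqref{expr-con} it had already prepared for the contraction lemma; your direct route avoids that bookkeeping at no cost.
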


\begin{proof} The proof is based on the expression \eqref{expr-con} of $A'(u;\Phi( v_h)- \Phi( w_h),\phi)$ derived in the proof of Lemma \ref{contraction} and the expression \eqref{expr-inv} of $A'(u; \Phi( v_h)- P_h u,\phi )$ derived in the proof of Lemma \ref{invariant}. 
Since $\Phi(u_h) = u_h$, we have
\begin{align} \label{u-p-01}
\begin{split}
A'(u; u_h - P_h u,\phi) & = A'(u; \Phi(u_h) - P_h u,\phi) \\
& =  A'(u; \Phi(u_h) - \Phi(P_h u),\phi)  +  A'(u; \Phi(P_h u) - P_h u,\phi). 
\end{split}
\end{align}
In view of \eqref{expr-con}, we obtain 
\begin{multline} \label{u-p-02}
| A'(u; \Phi(u_h) - \Phi(P_h u),\phi)| \leq C  ||u_h-P_h u ||_{W^{2,\infty}(\mathcal{T}_h)}  ||P_h u-u ||_{H^{1}} ||\phi||_{H^1} + \\
C ||u_h- u ||_{W^{2,\infty}(\mathcal{T}_h)}  || u_h- P_h u ||_{H^{1}} ||\phi||_{H^1} + C  ||u_h-P_h u ||_{W^{2,\infty}(\mathcal{T}_h)} 
\sum_{K \in \mathcal{T}_h } | P_h u - u |_{H^{1}(\partial K)} ||\phi||_{L^2(\partial K)} \\
+  C  ||u_h- u ||_{W^{2,\infty}(\mathcal{T}_h)} 
\sum_{K \in \mathcal{T}_h } || P_h u - u_h ||_{H^{1}(\partial K)} ||\phi||_{L^2(\partial K)}.
\end{multline}
By an inverse estimate, Theorem \ref{2grid-imp-thm}, triangle inequality and \eqref{2grid-proj1}, we have $||u_h-P_h u ||_{W^{2,\infty}(\mathcal{T}_h)} \leq
C h^{k-1} | \ln h| $. Similarly, using \eqref{2grid-proj2}, we have $||u_h- u ||_{W^{2,\infty}(\mathcal{T}_h)} \leq C h^{k-1} | \ln h|$.  
Next, by the scaled trace inverse inequality \eqref{trace-inverse} 
and Cauchy-Schwarz inequality
\begin{multline} \label{u-p-03}
 \sum_{K \in \mathcal{T}_h } || P_h u - u_h ||_{H^{1}(\partial K)} ||\phi||_{L^2(\partial K)}  \leq C h^{-1} 
  \sum_{K \in \mathcal{T}_h } || P_h u - u_h ||_{H^{1}( K)} ||\phi||_{H^1( K)} \\
    \leq C h^{-1}  || P_h u - u_h ||_{H^{1}} ||\phi||_{H^1}.
\end{multline}

By \eqref{bramble}, an inverse estimate and \eqref{local-nodal-interpolant} 
\begin{align*}
| P_h u - u |_{H^{1}(\partial K)}  & \leq | P_h u - I_h u |_{H^{1}(\partial K)} + | I_h u - u |_{H^{1}(\partial K)}  \\
& \leq C h^{-\frac{1}{2}}  | P_h u - I_h u |_{H^{1}(K)} + C h^{k-\frac{1}{2}} ||u||_{H^{k+1}(K)} \\
& \leq  C h^{-\frac{1}{2}}  | P_h u -  u |_{H^{1}(K)} +  C h^{-\frac{1}{2}}  | u - I_h u |_{H^{1}(K)} + C h^{k-\frac{1}{2}} ||u||_{H^{k+1}(K)}\\
& \leq C h^{-\frac{1}{2}}  | P_h u -  u |_{H^{1}(K)}  + C h^{k-\frac{1}{2}} ||u||_{H^{k+1}(K)}.
\end{align*}
We have $|| P_h u - u ||_{H^{1}} \leq C h^k ||u||_{H^{k+1}}$. This follows from \cite[Theorem 5.4.4]{Brenner08} in the case of homogeneous boundary conditions. The proof of the general case is similar to \eqref{2grid-proj1}.
Thus
$$
\bigg( \sum_{K \in \mathcal{T}_h} | P_h u - u |_{H^{1}(\partial K)} ^2 \bigg)^{\frac{1}{2}} \leq 
C h^{-\frac{1}{2}}  | P_h u -  u |_{H^{1}}  + C h^{k-\frac{1}{2}} ||u||_{H^{k+1}} \leq C h^{k-\frac{1}{2}} ||u||_{H^{k+1}}.
$$ 
As with \eqref{u-p-03}, by \eqref{trace-inverse} and Cauchy-Schwarz inequality, it follows that
\begin{multline} \label{u-p-04}
 \sum_{K \in \mathcal{T}_h } | P_h u - u |_{H^{1}(\partial K)} ||\phi||_{L^2(\partial K)} \leq 
 C h^{-\frac{1}{2} } \sum_{K \in \mathcal{T}_h } | P_h u - u |_{H^{1}(\partial K)} ||\phi||_{H^1( K)} \\
  \leq 
  C h^{k-1} ||u||_{H^{k+1} } ||\phi||_{H^1}. 
\end{multline}
We conclude from \eqref{u-p-02}--\eqref{u-p-04} that
\begin{multline*} %
| A'(u; \Phi(u_h) - \Phi(P_h u),\phi)| \leq C h^{k-1} | \ln h| \, || P_h u - u ||_{H^{1}} ||\phi||_{H^1} \\
+ C h^{k-1} | \ln h| \, ||  u_h - P_h u  ||_{H^{1}} ||\phi||_{H^1} 
+  C h^{k-1}  ||u_h - P_h u ||_{W^{2,\infty}} ||\phi||_{H^1}  \\
+ C h^{k-2} | \ln h| \, ||  u_h - P_h u  ||_{H^{1}} ||\phi||_{H^1}.
\end{multline*}
Therefore since by Theorem \ref{2grid-imp-thm} we have the suboptimal estimate $ ||  u_h - P_h u  ||_{H^{1}} \leq C ||u_h - P_h u ||_{W^{1,\infty}}  \leq C h^k | \ln h| $
\begin{multline} \label{u-p-05}
| A'(u; \Phi(u_h) - \Phi(P_h u),\phi)| \leq  C h^{k-1} | \ln h|   h^k ||\phi||_{H^1}  + C h^{k-1} | \ln h|^2   h^k ||\phi||_{H^1} \\
 + C h^{k-2} | \ln h| h^k ||\phi||_{H^1} + C h^{k-2} | \ln h|^2 h^k ||\phi||_{H^1}\\
 \leq C h^{k-2} | \ln h|^2 h^k ||\phi||_{H^1} \leq C h^k ||\phi||_{H^1},
\end{multline}
for $k \geq 3$. By \eqref{expr-inv}, we have
$$
 A'(u; \Phi(P_h u) - P_h u,\phi) = -R(u;P_h u-u,\phi) + A'(u;I_h G - G, \phi).
$$
Using \eqref{R1-simple} and inverse estimates as for \eqref{u-p-04}, we obtain
\begin{multline*} 
| A'(u; \Phi(P_h u) - P_h u,\phi) | \leq C ||P_h u - u ||_{W^{2,\infty}(\mathcal{T}_h)} ||P_h u - u ||_{H^{1}}  ||\phi||_{H^1} \\
+ C  h^{-1} ||P_h u - u ||_{W^{2,\infty}(\mathcal{T}_h)}  || P_h u - u ||_{H^{1}} ||\phi||_{H^1} 
+C\|I_hG-G\|_{H^1}\|\phi\|_{H^1},
\end{multline*}
i.e.
\begin{multline} \label{u-p-06}
| A'(u; \Phi(P_h u) - P_h u,\phi) | \leq C ( h^{k-1} |\ln h| h^k + h^{k-2} |\ln h| h^k + h^k ) ||\phi||_{H^1} \\ \leq C h^k ||\phi||_{H^1}, 
\end{multline}
for $k \geq 3$. Taking $\phi = u_h - P_h u $ in \eqref{u-p-01} and using \eqref{u-p-05} and \eqref{u-p-06}, we get from Poincar\'e's inequality
and $k \geq 3$, for $h$ sufficiently small
$$
 ||  u_h - P_h u  ||_{H^{1}} \leq Ch^k.
$$
This completes the proof by a triangle inequality.

\end{proof}

\section{Analysis of the two-grid algorithm} \label{two-grid-section}

The two-grid discretization for solving nonlinear problems is a well established technique. The nonlinear problem \eqref{IP-abstract} is first solved on a coarse mesh of size $H$. 
The solution $u_H$ is used as an initial guess for one step of Newton's method on the finer mesh of size $h$. 
Both steps are inexpensive and the method is more efficient than solving the problem through multiple iterations of Newton's method directly on the fine mesh. 

Since $u$ is smooth and strictly convex, the smallest eigenvalue of $D^2 u$ is uniformly bounded from below. Thus by the continuity of the eigenvalues of the Hessian as a function of its entries and by approximation, $D^2 u_H$ is uniformly positive definite on each element
 for $H$ sufficiently small. A detailed argument was given in \cite[Lemma 4]{AwanouPseudo10} in the context of $C^1$ approximations. We consider the version of \cite[Algorithm 5.5]{Xu1996}. 

{\bf Two-grid algorithm}

\begin{enumerate} \label{2grid-alg1}
\item[1.]
find $u_H \in V_H$, $u_H=g_H$ on $\partial \Omega$, and $A(u_H,\chi) = 0, ~ \forall ~\chi \in  V_H\cap H^1_0(\Omega)$,
\item[2.]  find $u^h \in V_h$, $u^h=g_h$ on $\partial \Omega$, and $A'(u_H;u^h-u_H,\phi)=-A(u_H,\phi),  ~\forall ~\phi \in  V_h\cap H^1_0(\Omega)$.
\end{enumerate}

Our goal is to show that the two-grid method is optimal in the sense that $||u-u^h||_{H^{1}} \leq C h^k$. 
\begin{thm}
We have the estimate
\begin{equation}\label{2grid-conv-lemma}
||u^h-u_h||_{H^{1}} \leq C h^k,
\end{equation}
for $k \geq 3, H=h^{\lambda}, 1> \lambda>  1/2 + (2+\epsilon)/(2k), 0< \epsilon <1$ and $h$ sufficiently small.
\end{thm}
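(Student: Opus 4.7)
The plan is to derive a Galerkin identity for $u^h-u_h$ driven by the Taylor remainder, and then to control that remainder in $H^1$ using the $W^{1,\infty}$ and $W^{2,\infty}$ bounds established earlier. The central observation is that because $\det$ acts on a $2\times 2$ matrix, the map $v\mapsto A(v,\phi)$ is a quadratic polynomial in $v$, so its Taylor expansion at $u_H$ is exact. For any $\phi\in V_h\cap H_0^1(\Omega)$ this gives $A(u_h,\phi)-A(u_H,\phi)=A'(u_H;u_h-u_H,\phi)+R(u_H;u_h-u_H,\phi)$; combining with $A(u_h,\phi)=0$ and the defining equation $A'(u_H;u^h-u_H,\phi)=-A(u_H,\phi)$ of step~2 of the algorithm yields the key identity
\begin{equation*}
A'(u_H;u^h-u_h,\phi)=R(u_H;u_h-u_H,\phi),\qquad \forall\,\phi\in V_h\cap H_0^1(\Omega).
\end{equation*}

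To harness the coercivity of the smooth form $A'(u;\cdot,\cdot)$ on $V_h\cap H_0^1(\Omega)$, I would write
\begin{equation*}
A'(u;u^h-u_h,\phi)=R(u_H;u_h-u_H,\phi)+\bigl[A'(u;u^h-u_h,\phi)-A'(u_H;u^h-u_h,\phi)\bigr],
\end{equation*}
test with $\phi=u^h-u_h\in V_h\cap H_0^1(\Omega)$, and apply coercivity together with Poincar\'e's inequality to bound $\|u^h-u_h\|_{H^1}^2$ by the two terms on the right. The bracketed drift term is controlled through Lemma~\ref{interior-penalty-frechet-lemma}: since $u\in C^3(\tir{\Omega})$ kills all jump contributions in $A'(u;\cdot,\cdot)$, the difference reduces to volume and edge terms in which $\cof D^2 u-\cof D^2 u_H$ appears. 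Using the coarse-mesh analogue of the bound $\|u-u_H\|_{W^{2,\infty}(\mathcal{T}_H)}\le CH^{k-1}|\ln H|$ from the proof of Lemma~\ref{invariant}, together with trace-inverse estimates on edges, this difference is bounded by $CH^{k-1}|\ln H|\,\|u^h-u_h\|_{H^1}^2$, and for $H$ small it is absorbed into the left-hand side.

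It remains to estimate the residual. Setting $v:=u_h-u_H=(u_h-u)+(u-u_H)$ and using Theorems~\ref{2grid-imp-thm}--\ref{2grid-imp2-thm} on both grids together with the triangle inequality, one obtains $\|v\|_{H^1}\le CH^k$ and $\|v\|_{W^{2,\infty}(\mathcal{T}_h)}\le CH^{k-1}|\ln H|$. I would then apply formula \eqref{R1-simple}: the volume integral is controlled by $\|\cof D^2 v\|_{L^\infty(\mathcal{T}_h)}\|Dv\|_{L^2}\|D\phi\|_{L^2}\le CH^{2k-1}|\ln H|\,\|\phi\|_{H^1}$, while the two edge-sum terms, after inverse trace estimates of the form $\|\phi\|_{L^2(e)}\le Ch^{-1/2}\|\phi\|_{L^2(K)}$ for $\phi\in V_h$, produce the dominant contribution $Ch^{-1}H^{2k-1}|\ln H|\,\|\phi\|_{H^1}$. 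The condition $H=h^\lambda$ with $\lambda>1/2+(2+\epsilon)/(2k)$ is precisely what is needed so that $h^{-1}H^{2k-1}|\ln H|\le Ch^k$ after all logarithmic factors are absorbed, giving the conclusion $\|u^h-u_h\|_{H^1}\le Ch^k$. The principal technical difficulty is the careful bookkeeping of the edge-sum terms in both the drift and the residual: one must track which integrals live on the fine versus coarse skeleton and apply inverse and trace estimates with the correct mesh parameter so that no power of $h$ or $H$ is wasted; the stated exponent on $\lambda$ is dictated precisely by the worst such contribution.
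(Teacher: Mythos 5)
Your overall route is the same as the paper's: derive the identity $A'(u_H;u^h-u_h,\phi)=R(u_H;u_h-u_H,\phi)$, transfer to the coercive form $A'(u;\cdot,\cdot)$ by adding and subtracting, test with $\phi=u_h-u^h$, absorb the drift, and bound the residual via \eqref{R1-simple} together with trace-inverse estimates. The residual bookkeeping is correct (and in fact marginally sharper than the paper's $Ch^{-2}H^{2k}|\ln H|$, obtained in \eqref{u-p-08} via an extra inverse estimate), and the final $\lambda$-condition is reached by the same reasoning.

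There is, however, a real error in your drift estimate. You assert that the drift term
$A'(u;u^h-u_h,\phi)-A'(u_H;u^h-u_h,\phi)=A'(u-u_H;u^h-u_h,\phi)$,
tested with $\phi=u^h-u_h$, is bounded by $CH^{k-1}|\ln H|\,\|u^h-u_h\|_{H^1}^2$. But expression \eqref{IP-frechet} shows that $A'(u-u_H;\cdot,\cdot)$ contains two edge sums in addition to the volume term, and the same trace-inverse estimates you invoke for the residual produce an extra factor $h^{-1}$ there as well. The correct coefficient is $Ch^{-1}H^{k-1}|\ln H|$, plus a third contribution of size $Ch^{-3/2}H^{k-1/2}|\ln h|$ coming from the jump term $\sum_e\int_e\[\{\!\{\cof D^2(u_h-u^h)\}\!\}D(u-u_H)\]\phi$, which requires a discrete Sobolev bound $\|\phi\|_{L^\infty}\le C(1+|\ln h|^{1/2})\|\phi\|_{H^1}$ to handle (this is \eqref{u-p-066} in the paper). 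Consequently, absorption is not automatic ``for $H$ small''; it requires $\lambda>1/(k-1)$ and $\lambda>3/(2k-1)$, i.e.\ a genuine compatibility constraint between $h$ and $H$. Fortunately, both are implied by $\lambda>1/2+(2+\epsilon)/(2k)$ once $k\ge3$, so your conclusion survives, but the argument as written skips a step that needs to be checked against the hypothesis on $\lambda$.
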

\begin{proof}
By definition of the two-grid algorithm, the definition of the residual \eqref{rdef0}, and $A(u_h,\phi)=0$ for $\phi \in V_h\cap  W^{1,\infty}_0(\Omega)$, we have

\begin{align*}
A'(u_H; u_h-u^h, \phi)&= A'(u_H; u_h-u_H, \phi) + A'(u_H; u_H-u^h, \phi) \\
& = A'(u_H; u_h-u_H, \phi) + A(u_H,\phi) \\
& = A(u_h,\phi) - R(u_H;u_h-u_H,\phi) = - R(u_H;u_h-u_H,\phi). 
\end{align*}
It follows that
\begin{align*}
A'(u; u_h-u^h, \phi)& = A'(u-u_H; u_h-u^h, \phi) + A'(u_H; u_h-u^h, \phi) \\
& = A'(u-u_H; u_h-u^h, \phi) - R(u_H;u_h-u_H,\phi). 
\end{align*}
With arguments similar to the ones used in the proof of Theorem \ref{2grid-imp2-thm}, we have
\begin{multline} \label{u-p-066}
| A'(u; u_h-u^h, \phi) | \leq C  || u - u_H ||_{W^{2,\infty}(\mathcal{T}_h)}  || u_h - u^h ||_{H^{1}} ||\phi||_{H^1} \\
+ C h^{-1} || u - u_H ||_{W^{2,\infty}(\mathcal{T}_h)}  || u_h - u^h ||_{H^{1}} ||\phi||_{H^1} 
+ C || \phi ||_{L^{\infty}} \sum_{K \in \mathcal{T}_h } | u - u_H |_{H^{1}(\partial K)} ||  u_h - u^h ||_{H^2(\partial K)}\\
+ |R(u_H;u_h-u_H,\phi)|.
\end{multline}
But, using \eqref{bramble}, \eqref{local-nodal-interpolant} and an inverse estimate
\begin{align*}
| u - u_H |_{H^{1}(\partial K)} & \leq | u - I_H u |_{H^{1}(\partial K)} + | I_H u - u_H |_{H^{1}(\partial K)} \\
& \leq C H^{k-\frac{1}{2}} ||u||_{H^{k+1}(K)} + C H^{-\frac{1}{2}}  | I_H u - u_H |_{H^{1}( K)} \\
& \leq C H^{k-\frac{1}{2}} ||u||_{H^{k+1}(K)} + C H^{-\frac{1}{2}}  | I_H u - u |_{H^{1}( K)} + C H^{-\frac{1}{2}}  | u - u_H |_{H^{1} ( K)}. 
\end{align*}
It follows that
$$
\bigg( \sum_{K \in \mathcal{T}_h } | u - u_H |^2_{H^{1}(\partial K)} \bigg)^{\frac{1}{2}} \leq C H^{k-\frac{1}{2}} ||u||_{H^{k+1}} + C H^{-\frac{1}{2}}  | I_H u - u |_{H^{1}} + C H^{-\frac{1}{2}}  | u - u_H |_{H^{1} }.
$$
We therefore obtain from \eqref{nodal-interpolant} and Theorem \ref{2grid-imp2-thm}
$$
\bigg( \sum_{K \in \mathcal{T}_h } | u - u_H |^2_{H^{1}(\partial K)} \bigg)^{\frac{1}{2}} \leq C H^{k-\frac{1}{2}} ||u||_{H^{k+1}}. 
$$
By Cauchy-Schwarz's inequality, \eqref{trace-inverse} and an inverse estimate, it follows that
$$
\sum_{K \in \mathcal{T}_h } | u - u_H |_{H^{1}(\partial K)} ||  u_h - u^h ||_{H^2(\partial K)} \leq C h^{- \frac{3}{2}}  H^{k-\frac{1}{2}} ||u||_{H^{k+1}}
 || u_h - u^h ||_{H^{1}}. 
$$
Therefore, since $|| u - u_H ||_{W^{2,\infty}(\mathcal{T}_h)} \leq C H^{k-1} |\ln H|$ and by the discrete Sobolev inequality, c.f.  \cite{Bramble86},
$|| \phi ||_{L^{\infty}}  \leq C (1+ | \ln h|^{ 1/2 }) ||\phi||_{H^1}$, we obtain from \eqref{u-p-066} 
\begin{multline} \label{u-p-07}
| A'(u; u_h-u^h, \phi) | \leq C h^{-1} || u - u_H ||_{W^{2,\infty}(\mathcal{T}_h)}  || u_h - u^h ||_{H^{1}} ||\phi||_{H^1} \\
+ C H^{k-\frac{1}{2}} h^{- \frac{3}{2}}   | \ln h| \, || u_h - u^h ||_{H^{1}}  ||u||_{H^{k+1} } ||\phi||_{H^1} + |R(u_H;u_h-u_H,\phi)| \\
\leq C h^{-1} H^{k-1} |\ln H| \, || u_h - u^h ||_{H^{1}} ||\phi||_{H^1} 
+ C H^{k-\frac{1}{2}} h^{- \frac{3}{2}} |\ln h| \,   || u_h - u^h ||_{H^{1}} ||\phi||_{H^1} \\
+ |R(u_H;u_h-u_H,\phi)|. 
\end{multline}
Using \eqref{R1-simple} and 
the trace estimates \eqref{trace-inverse} and \eqref{trace-estimate}, we have
\begin{multline} \label{u-p-08}
 |R(u_H;u_h-u_H,\phi)| \leq C  || u_h - u_H ||_{W^{2,\infty}(\mathcal{T}_h)}  || u_h - u_H ||_{H^{1}} ||\phi||_{H^1} \\
+ C h^{-1} || u_h - u_H ||_{W^{2,\infty}(\mathcal{T}_h)}  || u_h - u_H ||_{H^{1}} ||\phi||_{H^1}  \\
\leq C h^{-2} || u_h - u_H ||_{W^{1,\infty}}  || u_h - u_H ||_{H^{1}} ||\phi||_{H^1}. 
\end{multline}
Taking $\phi = u_h - u^h $ in \eqref{u-p-07} and using \eqref{u-p-08}, we get from Poincar\'e's inequality, Theorems \ref{2grid-imp-thm} and \ref{2grid-imp2-thm}
for $k \geq 3$ and $h$ sufficiently small
\begin{multline*}
|| u_h - u^h ||_{H^{1}} \leq C (h^{-1} H^{k-1} |\ln H| + H^{k-\frac{1}{2}} h^{- \frac{3}{2}} |\ln h|  ) || u_h - u^h ||_{H^{1}} \\
+C h^{-2} (h^{k} |\ln h | + H^{k} |\ln H |) (h^k+H^k).
\end{multline*}
We conclude that for $H=h^{\lambda}$, $\lambda > \max \{1/(k-1), 3/(2k-1) \} = 3/(2k-1)$ and $k \geq 3$, 
$$
|| u_h - u^h ||_{H^{1}} \leq C h^{-2}  |\ln H \, | H^{2k}.
$$
We therefore get $||u^h-u_h||_{H^{1}} \leq C h^k$ provided $\lambda >  3/(2k-1)$ and $2 \lambda k -2- \epsilon > k$ for some $\epsilon \in (0,1) $, that is 
$\lambda >  \max \{ (k+2+ \epsilon)/(2k), 3/(2k-1) \} = 1/2 + (2+\epsilon)/(2k)$ for $k \geq 3$. 
\end{proof}

\section{Numerical experiments}\label{num}

The computational domain is taken to be the unit square $[0,1]^2$. 
A uniform grid is obtained by dividing the domain into smaller equal size squares, then dividing each square into two triangles by taking the diagonal with positive slope. 
We consider a smooth convex test function $u(x,y)=e^{(x^2+y^2)/2}$ so that 
$f(x,y)=(1+x^2+y^2)e^{(x^2+y^2)}$ and $g(x,y)=e^{(x^2+y^2)/2}$ on $\partial \Omega$. 
While our convergence analysis is only for cubic and higher order elements, we believe the results should be true for quadratic elements. We show numerical convergence of the two-grid solution $u^h$ to the continuous solution $u$ using $P_2$ finite elements.
On the coarse grid of size $H$, we first seek an initial guess $u_H^0$ of $u_H$ as the standard finite element approximation of the solution $u_0$ of
\begin{equation*}
\Delta u_0=2\sqrt{f}, \quad u_0=g \text{ on } \partial \Omega.
\end{equation*}
For solving the coarse grid problem, we perform Newton's method on the coarse grid, setting the maximum iterations to 10 and we impose that the algorithm terminates when 
$\|u_H\|_{L^{\infty}}/ \|u^0_H\|_{L^{\infty}} \leq 10^{-6}.$
We report computation times (in seconds) for the two-grid method and Newton's method on the fine grid, as well as  $H^1$ errors and associated rate of convergence. 
See Table \ref{tab1} for $\lambda=1+ 2 \ln 2/(\ln h) =1-2/n, h=1/2^n, n=2, 3 \ldots$, and Table \ref{tab2} for $\lambda=1+  \ln 2/(\ln h)=1-1/n, h=1/2^n, n=2, 3 \ldots$


Experiments with $P_3$ elements gave the expected results, and so we do not report those here.
We also attempted several multigrid experiments, where we interpolate between a series of meshes before ending on the fine grid. However these gave results comparable to the two-grid algorithm. At the cost of extra computation time, there is a slight increase in accuracy if a second iteration is performed on the fine grid. We do not report these results since a second iteration on the fine grid does not affect the rate of convergence of the method. 

The two-grid computations are accurate and fast compared with Newton's method.
The computations were done in FreeFEM++ on an HP computer with Pentium dual-core 2.60 GHz processor running Windows 10. 
%

\begin{table} 
\begin{tabular}{|c|c|c|c|c|c|c|c|} \hline
$H$ & $h$ &$||u-u_h||_{H^1}$ & rate & $||u-u^h||_{H^1}$ & rate & two-grid time & Newton time\\ \hline
$1/2^0$ & $1/2^2$ &2.19 $10^{-2}$ & -       & 3.12 $10^{-1}$ & -             & 2.80 $10^{-2}$& 4.90 $10^{-2}$ \\ \hline
$1/2^1$ &$1/2^3$ & 5.55 $10^{-3}$ & 1.98 & 3.22 $10^{-2}$ & 3.28 & 7.10 $10^{-2}$ & 1.67 $10^{-1}$ \\ \hline
$1/2^2$ &$1/2^4$ &1.39 $10^{-3}$ &2.00   & 3.69 $10^{-3}$ & 3.12 & 2.37 $10^{-1}$ &8.58 $10^{-1}$\\ \hline
$1/2^3$ & $1/2^5$ &3.48 $10^{-4}$ & 2.00 & 6.34 $10^{-4}$ & 2.54 & 9.92 $10^{-1}$&3.25 $10^{0}$\\ \hline
$1/2^4$ & $1/2^6$ & 8.70 $10^{-5}$ & 2.00& 1.48 $10^{-4}$ & 2.10 & 4.15 $10^{0}$ & 1.31 $10^{1}$\\ \hline
$1/2^5$ & $1/2^7$ & 2.18 $10^{-5}$ &2.00 & 3.65 $10^{-5}$ & 2.02 & 1.87 $10^{1}$& 5.59 $10^{1}$\\ \hline
$1/2^6$ & $1/2^8$ & 5.44 $10^{-6}$ & 2.00 & 9.11 $10^{-6}$ & 2.00 &8.57 $10^{1}$& 2.73 $10^{2}$\\ \hline
\end{tabular}
\caption{ $\lambda=1+ \frac{2 \ln 2}{\ln h} $} \label{tab1}
\end{table}

\begin{table} 
\begin{tabular}{|c|c|c|c|c|c|c|c|} \hline
$H$ & $h$ &$||u-u_h||_{H^1}$ & rate & $||u-u^h||_{H^1}$ & rate & two-grid time & Newton time\\ \hline
$1/2^1$ & $1/2^2$ &2.19 $10^{-2}$ & -      & 2.72 $10^{-2}$       & -       & 3.10 $10^{-2}$& 4.90 $10^{-2}$ \\ \hline
$1/2^2$ &$1/2^3$ & 5.55 $10^{-3}$ & 1.98 & 5.97 $10^{-3}$ & 2.19 & 7.80 $10^{-2}$ & 1.67 $10^{-1}$ \\ \hline
$1/2^3$ &$1/2^4$ &1.39 $10^{-3}$ &2.00   & 1.43 $10^{-3}$ & 2.06 & 3.44 $10^{-1}$ &8.58 $10^{-1}$\\ \hline
$1/2^4$ & $1/2^5$ &3.48 $10^{-4}$ & 2.00 & 3.54 $10^{-4}$ & 2.01 & 1.69 $10^{0}$&3.25 $10^{0}$\\ \hline
$1/2^5$ & $1/2^6$ & 8.70 $10^{-5}$ & 2.00 & 8.83 $10^{-5}$ & 2.00 & 6.44 $10^{0}$ & 1.31 $10^{1}$\\ \hline
$1/2^6$ & $1/2^7$ & 2.18 $10^{-5}$ &2.00 & 2.21 $10^{-5}$ & 2.00 & 3.18 $10^{1}$& 5.59 $10^{1}$\\ \hline
$1/2^7$ & $1/2^8$ & 5.44 $10^{-6}$ & 2.00 & 5.51 $10^{-6}$ & 2.00 &1.20 $10^{2}$& 2.73 $10^{2}$\\ \hline
\end{tabular}
\caption{ $\lambda=1+ \frac{ \ln 2}{\ln h} $} \label{tab2}
\end{table}


\end{document}